\newtheorem{theorem}{Theorem}[section]
\newtheorem{lemma}[theorem]{Lemma}
\newtheorem{proposition}[theorem]{Proposition}
\newtheorem{assumption}[theorem]{Assumption}
\theoremstyle{remark}\newtheorem{remark}{Remark}
\theoremstyle{remark}
\newcommand{\EE}{\mathbb{E} }
\newcommand{\one}{\mathbbm{1} }
\newcommand{\PP}{\mathbb{P} }
\newcommand{\RR}{\mathbb{R} }
\newcommand{\ff}{\mathcal{F} }
\newcommand{\HS}{L_{\mathrm{HS}} }
\begin{document}

\author{Si Cheng and Michael R. Tehranchi\\
University of Cambridge }
\address{Statistical Laboratory\\
Centre for Mathematical Sciences\\
Wilberforce Road\\
Cambridge CB3 0WB\\
UK}
\email{m.tehranchi@statslab.cam.ac.uk}

\title{Spectral term structure models}


\begin{abstract}
This note studies a certain stochastic evolution equation in the space of probability measures,
including existence and uniqueness results. A solution of this equation gives rise, in a natural way,
to an interest rate term structure model, in the same spirit as the Heath-Jarrow-Morton framework.
\end{abstract}

\maketitle

In this note, we are interested in a stochastic process $(\mu_t)_{t \ge 0}$ taking values in the space of (Borel)
probability  measures on $\RR$ 
and
 whose stochastic evolution can be described formally by the equation
\begin{equation}\label{eq:evol}
d\left[\mu_t( dr) \right] = (R_t- r) \mu_t(dr) dt +  M(dr \times dt)
\end{equation}
where
$$
R_t = \int_{\RR} r \ \mu_t(dr)
$$
and $M$ is a random signed measure on $\RR \times \RR_+$ with the
property that $M( \RR \times (0,t] ) = 0$ for all $t \ge 0$.
We will give a more rigorous account of the evolution equation \eqref{eq:evol} later, but for the moment,
one should interpret it to mean  that the real-valued process $M^\varphi$ defined by
$$
M_t^\varphi = \int_{\RR} \varphi(r) \mu_t(dr) - \int_{\RR} \varphi(r) \mu_0(dr)  -  \int_0^t \left(\int_{\RR}
 \varphi(r)(R_s-r) \mu_s(dr) \right) ds
$$
is a  local  martingale for all test functions $\varphi:\RR \to \RR$ in some suitable collection.

The reason for our interest in the process $(\mu_t)_{t \ge 0}$ is contained in the following
computation.
For $0 \le t \le T$, let
$$
P(t,T) =  \int_{\RR} e^{-(T-t) r} \mu_t(dr).
$$
Then  by a formal application of It\^o's formula we have
$$
e^{- \int_0^t R_u   du } P(t,T) = P(0,T) + \int_{\RR \times (0,t]}  e^{- \int_0^s R_u   du  -(T-s) r} M(dr \times ds).
$$
Of course, the  stochastic integral on the right-hand side must be interpreted carefully.
  Nevertheless, if we proceed
optimistically, we can hope to find suitable assumptions such that the right-hand side
is a true martingale, and, in particular, since $P(T,T) = 1$, we have will have
$$
\EE\left[ e^{- \int_t^T R_s ds } | \ff_t \right]= P(t,T)
$$
for all $0 \le t \le T.$  The above formula has a financial interpretation.
Consider a continuous-time market model where the
the time-$t$ spot interest rate is $R_t$ and  the time-$t$ price of the zero-coupon bond of maturity $T$
is $P(t,T)$.
Then the underlying probability measure $\PP$ is a risk-neutral measure for the model,
and in particular, the bond market has no arbitrage.

In term structure modelling, it is often desirable to have non-negative interest rates.
One appealing feature of this framework is that
 it is very easy to ensure that the interest rate $r_t$ is non-negative:
it is  sufficient  that the measure $\mu_t$ is
supported on $[0,\infty)$.

The above form of modelling the interest rate term structure
is inspired by the recent paper \cite{S} of Siegel.
Siegel's modelling scheme can be described as follows.  Fix $n \ge 2$, and $n$ real numbers
$r_1, \ldots, r_n$. Suppose the processes
$X^1, \ldots, X^n$ evolve as
$$
dX_t^i = X_t^i (R_t - r_i) dt + dM^i_t
$$
for all $1 \le i \le n$, where
$$
R_t = \sum_{i=1}^n r_i X^i_t
$$
and the processes $M^1, \ldots, M^n$ are local martingales.
The measure $\mu_t$ defined by
$$
\mu_t = \sum_{i=1}^n X_t^i \delta_{r_i},
$$
satisfies the evolution equation \eqref{eq:evol},
where the notation $\delta_r$ denotes the Dirac measure concentrated at $r$
and formally
$$
M(dr \times dt) = \sum_{i=1}^n    \delta_{r_i}(dr)  dM_t^i.
$$

The goal of this paper is to treat the infinite dimensional version of the Siegel's
model in the spirit of Filipovic's account \cite{filipovic} of the Heath--Jarrow--Morton
term structure framework.
Two technical challenges appear  in studying the formal evolution equation \eqref{eq:evol}.
The first challenge is to deal with the nonlinearity appearing in the drift.  Indeed,
the nonlinearity is quadratic, and thus not globally Lipschitz.
Our solution to this problem is to restrict our attention to measures with bounded support, in which
case it is possible to treat the nonlinearity as though it were Lipschitz.
The second challenge is
how to define properly the stochastic integral with respect to a local martingale taking
values in a possibly infinite-dimensional space of signed measure. We by-pass this
difficulty by letting our local martingale take values in a larger space of
distributions which can be endowed with the structure of a separable Hilbert space, and
then appealing to the well-known Hilbert space stochastic integration theory.
Finally, to ensure that the distribution valued process $(\mu_t)_{t \ge 0}$   actually takes values in the set
of probability measures, we employ a discretisation argument.

\section{Set-up and mathematical preliminaries}
Although the object of interests $\mu_t$ are probability measures,
we study the evolution equation \eqref{eq:evol} in the well-known Hilbert space framework,
in the style of the book of Da Prato \& Zabzcyck \cite{daprato-zabzcyck}.

\subsection{The state space}
Fix a bounded closed interval $I \subset \RR$ containing the origin.
For an absolutely continuous function $\varphi$ on $I$ let
$$
\| \varphi \|_H =   \varphi(0)^2 + \int_I  \varphi'(r)^2 dr
$$
Let $H$ be the space of absolutely continuous functions $\phi$ such that
$\| \phi \|_H < \infty$.  It is well-known that $H$ is a separable Hilbert space
with respect to the norm $\| \cdot \|_H$.

Now for a finite signed measure $\mu$ on $I$, let
$$
\| \mu \|_{H^*} = \sup_{ \varphi \in H, \| \phi \|_H = 1 } \int_I \varphi(r) \mu(dr)
$$
Let $H^*$ be the completion of the space of finite signed measures with respect to
this norm $\| \cdot \|_{H^*}$.   The space $H^*$ is the dual space of $H$ with respect
to the Banach structure of $H$.  We will occasionally use the notation
$\langle  \cdot , \cdot  \rangle_{H^*, H} : H^* \times H \to \RR$ to denote the duality pairing, so
that when $\mu$ is a signed measure
$$
\langle  \mu, \varphi  \rangle_{H^*, H } =  \int_I \varphi(r) \mu(dr).
$$

But since $H$ is a separable Hilbert space, we
know by the Riesz representation theorem that the dual space $H^*$
can be identified isometrically with $H$.
Indeed, assuming $I = [-a,b]$, each signed measure $\mu \in H^*$ corresponds to a function $\varphi \in H$
by
$$
\varphi(r) = \mu(I) + \left\{\begin{array}{ll} \int_0^r \mu(s, b] ds & \mbox{ if } r> 0 \\ \int_{r}^0 \mu[-a,s) ds & \mbox{ if }
r \le 0 \end{array} \right.
$$
and hence the norm can be computed for signed measures $\mu$ by the formula
$$
\| \mu \|_{H^*}^2 = | \mu(I) |^2 + \int_{-a}^0 \mu[-a,r)^2 dr + \int_0^b \mu( r, b]^2 dr.
$$

Finally, we single out an important subset:
$$
\mathcal P = \{ \mbox{ probability measures on } I \} \subset H^*.
$$

\subsection{The local martingale}
We now explain how we construct the `random signed measure'   appearing in
equation \eqref{eq:evol}.   Given the set-up described above,
we will consider local martingales valued in $H^*$.  Such local martingales
will be built from stochastic integrals with respect to a cylindrical
Brownian motion.

Let $G$ an arbitrary real separable Hilbert.  Since the specific structure
of $G$ is irrelevant, we will identify the dual space $G^*$ with $G$ without comment
and denote the inner product by $\langle \cdot, \cdot \rangle_G : G \times G \to \RR$.
Indeed, the reader may let $G = \ell^2$ without loss.

Let $W$ be a Brownian motion defined cylindrically in $G$. Recall that this
means that
$$
W= \{ W_t(g):  t \ge 0, g \in G \}
$$
is such that for each $g \in G$ such that $\|g \|_G = 1$, the process $W(g)$ is a
Brownian motion, and $W$ is linear in the sense that $W(\alpha g+ \beta h) = \alpha W(g) + \beta W(h)$
for all $g, h \in G$ and $\alpha, \beta \in \RR$.
Heuristically, the cylindrical Brownian motion can be realised by the formal
sum
$$
W_t = \sum_{i=1}^{\infty} W_t^i e_i.
$$
where  $W^1, W^2, \ldots$ are independent Brownian motions  and $(e_n)_n$
is a complete orthonormal basis of $G$.

For a linear map  $A: G \to H^*$ define
$$
\| A \|_{\HS}^2  = \sum_n \| A e_n \|_{H^*}^2,
$$
where $(e_n)_n$ is a complete orthonormal basis of $G$ as before.
In fact,   the norm $\| \cdot \|_{\HS}$ is independent
of the choice of basis $(e_n)_n$.
The  space of operators $A$ such that
$\| A \|_{\HS} < \infty$ are the Hilbert--Schmidt
operators $\HS(G,H^*)$.  Occasionally we will identify
an operator in $\HS(G,H^*)$ with a vector in the tensor
product space $H^* \otimes G$ in the obvious way. Finally, for
an element $A \in \HS(G,H^*)$ we let $A^* \in \HS(H, G)$ be
the adjoint defined by
$$
\langle Ag, \varphi \rangle_{H^*, H} = \langle g, A^* \varphi \rangle_G \mbox{ for all } g \in G, \varphi \in H.
$$

We recall basic result of the Hilbert space integration theory used here.
\begin{proposition}
Let $(\sigma_t)_{t \ge 0}$ be a predictable process taking values in $\HS(G, H^*)$
and such that
$$
\int_0^t \| \sigma_s \|^2_{\HS}ds < \infty \mbox{ almost surely for all } t \ge 0.
$$
Then the stochastic integral
$$
M_t = \int_0^t \sigma_s dW_s
$$
is well-defined, and the process $M$ is a continuous local martingale taking values in $H^*$.
\end{proposition}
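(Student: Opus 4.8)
The plan is to follow the standard construction of the Hilbert space stochastic integral; since, by the Riesz identification described above, $H^*$ is itself a separable Hilbert space, the theory of Da Prato \& Zabczyk \cite{daprato-zabzcyck} applies verbatim, so I only sketch the main steps.

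First I would treat \emph{elementary} integrands, i.e.\ processes of the form $\sigma_s = \sum_{j=0}^{N-1} \Phi_j \one_{(t_j, t_{j+1}]}(s)$ with $0 = t_0 < \cdots < t_N$ and each $\Phi_j$ a bounded, $\ff_{t_j}$-measurable, $\HS(G,H^*)$-valued random variable, and set $M_t = \sum_j \Phi_j(W_{t_{j+1}\wedge t} - W_{t_j \wedge t})$. The key point is that, although an increment $W_u - W_v$ of the cylindrical Brownian motion is not itself an element of $G$, the Hilbert--Schmidt property ensures that $\Phi_j(W_u - W_v) := \sum_n (\Phi_j e_n)(W^n_u - W^n_v)$ converges in $L^2(\Omega; H^*)$, with $\EE\| \Phi_j(W_u-W_v)\|_{H^*}^2 = (u-v)\,\EE\|\Phi_j\|_{\HS}^2$. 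Conditioning on $\ff_{t_j}$ and using independence of the Brownian increments then yields the It\^o isometry $\EE\|M_t\|_{H^*}^2 = \EE\int_0^t \|\sigma_s\|_{\HS}^2\, ds$, and a direct computation shows that $(M_t)_{t\ge 0}$ is a continuous $H^*$-valued martingale.

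Next I would pass to integrands $\sigma$ satisfying $\EE\int_0^T \|\sigma_s\|_{\HS}^2\,ds < \infty$. The elementary integrands are dense in the space of predictable $\HS(G,H^*)$-valued processes equipped with the norm $\bigl(\EE\int_0^T\|\sigma_s\|_{\HS}^2\,ds\bigr)^{1/2}$, so one can choose elementary $\sigma^{(k)} \to \sigma$ in this norm; by the It\^o isometry the corresponding integrals $M^{(k)}$ are Cauchy in $L^2(\Omega;H^*)$ uniformly in $t \in [0,T]$, and Doob's maximal inequality upgrades this to convergence in $L^2(\Omega; C([0,T];H^*))$ along a subsequence. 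The limit $M$ is therefore a continuous $H^*$-valued square-integrable martingale, independent of the approximating sequence, and still satisfies the isometry.

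Finally, for a general $\sigma$ with only $\int_0^t\|\sigma_s\|_{\HS}^2\,ds<\infty$ almost surely, I would localise: set $\tau_n = \inf\{t \ge 0 : \int_0^t \|\sigma_s\|_{\HS}^2\,ds \ge n\}$, which are stopping times increasing to $\infty$, apply the previous step to $\sigma\one_{[0,\tau_n]}$ to obtain continuous martingales $M^n$, verify the consistency $M^n_{\cdot \wedge \tau_m} = M^m$ for $m \le n$, and define $M_t$ to be the common value $M^n_t$ for $t \le \tau_n$. Then $M$ is a well-defined continuous process and $(\tau_n)_n$ is a reducing sequence exhibiting it as a local martingale. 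The step requiring the most care is the elementary-integrand level --- specifically, checking that the Hilbert--Schmidt norm is exactly the right quantity for $\Phi(W_u - W_v)$ to be a genuine $H^*$-valued random variable and for the isometry to hold; once that is in place, the remainder is the routine $L^2$-extension-plus-localisation machinery.
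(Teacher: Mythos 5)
Your sketch is correct and is precisely the standard construction (elementary integrands, It\^o isometry, $L^2$-extension via density and Doob's inequality, then localisation) found in Da Prato \& Zabczyk, which is exactly what the paper does here: it offers no proof of its own and simply cites \cite{CT} and \cite{daprato-zabzcyck} for this result. You have correctly identified the one point needing care --- that the Hilbert--Schmidt condition is what makes $\Phi(W_u-W_v)=\sum_n(\Phi e_n)(W^n_u-W^n_v)$ converge in $L^2(\Omega;H^*)$ and yields the isometry --- so nothing is missing.
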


For a proof see \cite{CT} or \cite{daprato-zabzcyck}.

 \subsection{An aside on technical conventions}
Because an infinite dimensional Hilbert space such as $H^*$ or $\HS(G, H^*)$ can
be equipped with several inequivalent topologies, it is necessary to make some conventions
to clarify our meaning.
When we speak
of measurable maps into a Banach space, we mean measurable with respect
to the Borel sigma-field generated by its norm topology. In fact, this notion of measurability is equivalent to the a priori weaker notion of weak measurability if the Hilbert space is separable \cite{RS}.Also,
when we say that $M$ is a martingale   valued in a Hilbert space, we mean that the
real random variable $\| M_t \|$
is integrable and the conditional expectation
$$
\EE( M_t | \ff_s) = M_s \mbox{ for all } 0 \le s \le t
$$
 is interpreted in the sense of   Bochner.
 Finally, when we say a Hilbert space-valued process   is continuous,
we mean the almost sure continuity of the sample paths with respect to the norm topology.

\section{Existence and uniqueness}\label{se:result}

We now consider the evolution equation \eqref{eq:evol}
with extra structure that
$$
M_t = \int_0^t \sigma( \mu_s) dW_s.
$$
To make things precise, we study the stochastic differential equation
\begin{equation}\label{eq:evol1}
d\mu_t = (R(\mu_t)-\rho^*) \mu_t dt + \sigma( \mu_t) dW_t
\end{equation}
where  $W$ is a cylindrical Brownian motion in the Hilbert space $G$;
the map $\sigma: \mathcal P \to \HS(G, H^*)$
is given;
the bounded linear operator $\rho : H \to H$ is defined by
$$
(\rho \varphi)(r) = r \varphi(r) \mbox{ for all } \varphi \in H, r \in I,
$$
and its adjoint $\rho^*: H^* \to H^*$ is defined by
$$
\langle \rho^* \mu, \varphi \rangle_{H^*,H} = \langle  \mu, \rho \varphi \rangle_{H^*,H} \mbox{ for all } \mu \in H^*, \varphi \in H,
$$
and the linear map $R: H^* \to \RR$ is
defined by
$$
R(\mu) = \langle \rho^* \mu, \mathbf{1} \rangle_{H^*, H}
$$
where $\mathbf{1} \in H$ is defined by
$$
\mathbf{1}(r) = 1 \mbox{ for all } r \in I.
$$

Note that if $\mu \in H^*$ is a signed measure then
$$
(\rho^* \mu)(dr) = r \mu(dr)
$$
and
$$
R(\mu) = \int_I r  \ \mu(dr),
$$
so the SDE given in equation \eqref{eq:evol1} captures the main features of our evolution equation \eqref{eq:evol}.

We now make the following additional assumptions:
\begin{assumption}\label{as:1} The map $\sigma: \mathcal P \to \HS(G, H^*)$ has the following properties:
\begin{description}
\item[Centering]
For all $\mu \in \mathcal P$, we have
$$
  \sigma(\mu)^* \mathbf{1}  =  0.
$$
\item[Lipschitz]
There is a constant $C > 0$ such that for all $\mu, \nu \in \mathcal P$ we have
$$
\| \sigma(\mu) - \sigma(\nu) \|_{\HS} \le C \| \mu - \nu \|_{H^*}.
$$
\item[Absolute continuity]
There exist a function  $g : \mathcal P \times I \to G$ and a constant $C > 0$ such that
$\| g(\mu, r) \|_G \le C$ for all $\mu \in \mathcal P, r \in I$ and
$$
\sigma(\mu)(dr) = g( \mu, r) \mu(dr).
$$
\end{description}
\end{assumption}

With this preparation, we now can state the main result:
\begin{theorem}\label{th:solex}
For every $\mu_0 \in \mathcal P$, the SDE given by equation \eqref{eq:evol1} has a unique
solution $(\mu_t)_{t \ge 0}$ valued in the space of probability measures $\mathcal P$ on $I$.
\end{theorem}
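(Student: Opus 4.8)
The plan is to proceed in three stages: first solve a truncated, globally Lipschitz version of \eqref{eq:evol1} using the standard fixed-point machinery for Hilbert-space-valued SDEs; second, remove the truncation by an a priori bound showing the solution stays in a region where the truncation is inactive; and finally, verify that the solution actually takes values in $\mathcal P$, not merely in $H^*$, via a discretisation argument. For the first stage, I would extend $\sigma$ from $\mathcal P$ to all of $H^*$ in a Lipschitz way (e.g.\ by composing with a Lipschitz retraction of $H^*$ onto the closed set $\mathcal P$, or by extending off a neighbourhood and cutting off). The genuinely problematic term is the drift $b(\mu) = (R(\mu) - \rho^*)\mu$, which is quadratic in $\mu$ because $R(\mu) = \langle \rho^*\mu, \mathbf 1\rangle_{H^*,H}$ is linear in $\mu$ and is multiplied by $\mu$ again; thus $b$ is only locally Lipschitz. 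Here I would exploit the hint in the introduction: replace $b$ by $b_N(\mu) = (R(\mu)\wedge N \vee (-N) - \rho^*)\mu \cdot \chi_N(\|\mu\|_{H^*})$ for a smooth cutoff $\chi_N$ equal to $1$ on $\{\|\mu\|_{H^*}\le N\}$ and $0$ off $\{\|\mu\|_{H^*}\le N+1\}$. On $\mathcal P$ we have $\|\mu\|_{H^*}$ controlled (since $|\mu(I)| = 1$ and the tail integrals are bounded because $I$ is a \emph{bounded} interval), and $|R(\mu)| \le \sup_{r\in I}|r| =: a\vee b$ is bounded, so for $N$ large enough the cutoff is inactive on $\mathcal P$ and $b_N$ agrees with $b$ there. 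The truncated coefficients $b_N$ and (the extension of) $\sigma$ are globally Lipschitz and of linear growth on $H^*$, so Proposition 1 together with the classical Picard iteration / Banach fixed point argument in the space of continuous square-integrable $H^*$-valued adapted processes (see \cite{daprato-zabzcyck}) yields a unique global solution $(\mu_t^N)_{t\ge 0}$.

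The second stage is to show $\mu_t^N \in \mathcal P$ for all $t$, which simultaneously removes the truncation (because then $\|\mu_t^N\|_{H^*}\le N$ automatically for $N$ large, so $\mu^N$ solves the original equation and uniqueness across different $N$ is immediate). I expect this to be the main obstacle, and it is here that the three parts of Assumption~\ref{as:1} enter. To test membership in $\mathcal P$ one must check two things: that the total mass $\mu_t(I)$ stays equal to $1$, and that $\mu_t$ is a nonnegative measure. The total-mass claim follows by applying the evolution to the test function $\mathbf 1 \in H$: using It\^o's formula in $H^*$ paired with $\mathbf 1$, the martingale part is $\int_0^t \langle \sigma(\mu_s)dW_s, \mathbf 1\rangle = \int_0^t (\sigma(\mu_s)^*\mathbf 1)\,dW_s = 0$ by the \textbf{Centering} condition, and the drift contributes $\int_0^t \langle (R(\mu_s)-\rho^*)\mu_s, \mathbf 1\rangle\,ds = \int_0^t (R(\mu_s)\mu_s(I) - R(\mu_s))\,ds$, which vanishes as soon as $\mu_s(I)=1$; a Gronwall-type argument on $\mu_t(I) - 1$ closes this. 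The nonnegativity is the delicate part. I would run the discretisation argument promised in the introduction: approximate $\mu^N$ by the piecewise-constant-coefficient Euler scheme on a mesh of size $h$, so that on each subinterval the equation is affine, $d\mu = (R(\mu_{t_k})-\rho^*)\mu\,dt + \sigma(\mu_{t_k})dW_t$, and write its solution through the one-dimensional slices. Because of the \textbf{Absolute continuity} condition $\sigma(\mu)(dr) = g(\mu,r)\mu(dr)$ with $\|g\|_G$ bounded, the frozen equation over a subinterval is, slice by slice, a geometric-type (linear multiplicative) equation for the mass: informally $\mu_{t}(dr) = \mu_{t_k}(dr)\,\mathcal E_{t_k,t}(r)$ where $\mathcal E$ is a positive stochastic exponential (explicitly $\exp(\int (R(\mu_{t_k})-r)\,ds + \int g(\mu_{t_k},r)dW_s - \tfrac12\int\|g(\mu_{t_k},r)\|_G^2 ds)$ read coordinatewise), hence the Euler approximation preserves nonnegativity exactly. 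Passing to the limit $h\to 0$ in $H^*$ (using the Lipschitz bounds to get convergence of the Euler scheme, again standard) and using that $\mathcal P$ is closed in $H^*$ gives $\mu_t^N\in\mathcal P$.

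Finally, combining the two stages: for $N$ large enough (depending only on $a,b$, hence uniform) the solution $\mu^N$ never leaves $\mathcal P$, where $b_N \equiv b$ and the $\sigma$-extension equals $\sigma$, so $\mu := \mu^N$ solves \eqref{eq:evol1}. For uniqueness, any $\mathcal P$-valued solution $\nu$ of \eqref{eq:evol1} is automatically a solution of the truncated equation for large $N$ (since $\|\nu_t\|_{H^*}$ is bounded on $\mathcal P$), so it coincides with $\mu^N$ by the fixed-point uniqueness. The one technical wrinkle to watch is justifying the coordinatewise/slicewise manipulations rigorously in $H^*$ rather than formally: I would phrase the stochastic-exponential step via the identification of $H^*$ with $H$ and the explicit norm formula $\|\mu\|_{H^*}^2 = |\mu(I)|^2 + \int_{-a}^0 \mu[-a,r)^2\,dr + \int_0^b \mu(r,b]^2\,dr$, so that "$\mu_t\ge 0$" becomes the statement that the functions $r\mapsto \mu_t[-a,r)$ and $r\mapsto \mu_t(r,b]$ are monotone of the right sign, which is preserved under the affine frozen dynamics. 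This is the step I would expect to require the most care to write out cleanly.
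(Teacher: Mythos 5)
Your stage 1 (Lipschitz truncation of the quadratic drift, extension of $\sigma$ by projection onto $\mathcal P$, Picard iteration in the space of continuous square-integrable $H^*$-valued processes) and your uniqueness and total-mass arguments (Gronwall plus the \textbf{Centering} condition tested against $\mathbf 1$) are sound and match the spirit of the paper's Step 1. The genuine gap is in the nonnegativity step, which is indeed the heart of the matter. The Euler scheme you display,
\begin{equation*}
d\mu_t = (R(\mu_{t_k})-\rho^*)\mu_t\,dt + \sigma(\mu_{t_k})\,dW_t ,
\end{equation*}
freezes the diffusion coefficient entirely, so on each subinterval the noise is \emph{additive}: slice by slice the solution is
\begin{equation*}
\mu_t(dr) = e^{(R(\mu_{t_k})-r)(t-t_k)}\mu_{t_k}(dr) + \Bigl(\int_{t_k}^t e^{(R(\mu_{t_k})-r)(t-s)}\,\langle g(\mu_{t_k},r), dW_s\rangle_G\Bigr)\mu_{t_k}(dr),
\end{equation*}
and the Gaussian second term can make the density negative with positive probability. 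The positive stochastic exponential $\mathcal E_{t_k,t}(r)=\exp\bigl(\int (R-r)ds+\int \langle g,dW\rangle-\tfrac12\int\|g\|_G^2 ds\bigr)$ that you write down solves a \emph{different} scheme, namely $d\mu_t(dr)=(R(\mu_{t_k})-r)\mu_t(dr)dt+\langle g(\mu_{t_k},r),dW_t\rangle\,\mu_t(dr)$, in which only the argument of $g$ is frozen but the multiplicative structure in $\mu_t$ is retained. That scheme would preserve positivity, but then you owe two further arguments you have not supplied: (i) well-posedness of the map $\mu\mapsto g(\nu,\cdot)\mu$ as an operator on $H^*$ — this requires $r\mapsto g(\nu,r)$ to be a multiplier of $H$ (some Sobolev regularity in $r$), whereas Assumption \ref{as:1} only gives boundedness of $\|g(\mu,r)\|_G$; and (ii) convergence of this nonstandard split-step scheme to the true solution, which is not covered by the classical Euler convergence theory you invoke.

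The paper circumvents exactly this difficulty by discretising in \emph{space} rather than in time: it first solves the equation for atomic initial measures, where positivity reduces to a finite-dimensional coordinatewise statement ($X^i_{t\wedge T}=\xi_i e^{Z^i_t-[Z^i]_t/2}$, valid because the \textbf{Absolute continuity} assumption yields $|b_i(x)|+\|\sigma_i(x)\|\le Cx_i$), and then passes to a general $\mu_0\in\mathcal P$ using the a priori stability estimate in $\mathcal S_T$ together with the equivalence of weak and $H^*$ convergence for probability measures on $I$. If you want to keep your time-discretisation route you would need either to strengthen Assumption \ref{as:1} to give $g(\mu,\cdot)$ enough regularity in $r$, or to carry out the multiplicative frozen scheme pathwise at the level of densities and prove its convergence in $H^*$ from scratch; as written, the positivity claim does not follow.
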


\section{Proofs}\label{se:proof}
\noindent
\textit{Step 1: Estimates of integrals}

In this subsection, we are going to prove:
\begin{theorem}\label{th:solesti}
Suppose $\bar{\mu} = (\mu_t)_{t \geq 0}$ and $\bar{\nu} = (\nu_t)_{t \geq 0}$ are two $\mathcal P$-valued solutions of the SDE \eqref{eq:evol1}. Then $\bar{\mu}, \bar{\nu} \in \mathcal{S}_T$ for any $T > 0$. And there is a constant $K > 0$ such that for all $T \ge 0$ we have
\begin{equation}\label{eq:esti3}
\EE( \sup_{0 \le t \le T} \| \mu_t- \nu_t\|_{H^*}^2 ) \leq \| \mu_0- \nu_0\|_{H^*}^2 e^{KT^2}
\end{equation}
In particular, if SDE \eqref{eq:evol1} has solution, it must be unique.
\end{theorem}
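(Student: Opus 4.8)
The plan is to run a standard Gronwall-type argument adapted to the quadratic-but-locally-Lipschitz drift, exploiting the boundedness of the interval $I$ and the structure of Assumption 2.4. First I would record the a priori bound $\|\mu_t\|_{H^*} \le c$ for any $\mathcal P$-valued solution, which follows because $\mu_t$ is a probability measure on the bounded set $I$ and the norm $\|\cdot\|_{H^*}$ is controlled for probability measures on $I$ by a deterministic constant depending only on $|I|$; this is what makes the nonlinearity $\mu \mapsto (R(\mu)-\rho^*)\mu$ behave like a Lipschitz map on $\mathcal P$. Concretely, $|R(\mu)| = |\int_I r\,\mu(dr)| \le \max_{r\in I}|r| =: a$, and $\|\rho^*\mu\|_{H^*} \le \|\rho\|_{\mathrm{op}}\|\mu\|_{H^*} \le a c$, so the drift is bounded on $\mathcal P$; moreover for $\mu,\nu \in \mathcal P$ one has $\|R(\mu)\mu - R(\nu)\nu\|_{H^*} \le |R(\mu)|\,\|\mu-\nu\|_{H^*} + |R(\mu)-R(\nu)|\,\|\nu\|_{H^*} \le (a c + a c)\|\mu-\nu\|_{H^*}$, using $|R(\mu)-R(\nu)| = |\langle \rho^*(\mu-\nu),\mathbf 1\rangle| \le \|\rho\|_{\mathrm{op}}\|\mathbf 1\|_H \|\mu-\nu\|_{H^*}$, and similarly $\|\rho^*\mu - \rho^*\nu\|_{H^*} \le a\|\mu-\nu\|_{H^*}$. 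Hence the full drift coefficient $b(\mu) := (R(\mu)-\rho^*)\mu$ is globally Lipschitz on $\mathcal P$ with some constant $L$ depending only on $a$ and $c$.

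Next I would write the difference of the two solutions in integral form,
$$
\mu_t - \nu_t = (\mu_0-\nu_0) + \int_0^t \big(b(\mu_s) - b(\nu_s)\big)\,ds + \int_0^t \big(\sigma(\mu_s)-\sigma(\nu_s)\big)\,dW_s,
$$
take $H^*$-norms, use $(x+y+z)^2 \le 3(x^2+y^2+z^2)$, and take suprema over $[0,T]$ followed by expectations. For the drift term, Cauchy--Schwarz in time gives $\sup_{t\le T}\|\int_0^t (b(\mu_s)-b(\nu_s))\,ds\|_{H^*}^2 \le T\int_0^T L^2 \|\mu_s-\nu_s\|_{H^*}^2\,ds$; this is the source of the $T^2$ in the exponent since iterating produces $T \cdot \int_0^T(\cdots)$. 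For the martingale term I would apply the Burkholder--Davis--Gundy inequality in the Hilbert space $H^*$ together with the Lipschitz property of $\sigma$ from Assumption 2.4, bounding $\EE\sup_{t\le T}\|\int_0^t(\sigma(\mu_s)-\sigma(\nu_s))\,dW_s\|_{H^*}^2 \le c_{\mathrm{BDG}}\,\EE\int_0^T \|\sigma(\mu_s)-\sigma(\nu_s)\|_{\HS}^2\,ds \le c_{\mathrm{BDG}} C^2 \int_0^T \EE\|\mu_s-\nu_s\|_{H^*}^2\,ds$. Combining, with $\phi(T) := \EE(\sup_{t\le T}\|\mu_t-\nu_t\|_{H^*}^2)$, I obtain an inequality of the form $\phi(T) \le 3\|\mu_0-\nu_0\|_{H^*}^2 + K'(1+T)\int_0^T \phi(s)\,ds$, and Gronwall's lemma yields $\phi(T) \le \|\mu_0-\nu_0\|_{H^*}^2\,e^{KT^2}$ for a suitable $K$ (a cleaner route to the exact stated form is to first fix a horizon $T$, get $\phi(t) \le 3\|\mu_0-\nu_0\|^2 + K'T\int_0^t\phi(s)\,ds$ on $[0,T]$, apply Gronwall to get $\phi(T)\le 3\|\mu_0-\nu_0\|^2 e^{K'T^2}$, then absorb the constant $3$ — or sharpen the constants — to reach \eqref{eq:esti3}). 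Uniqueness is then immediate: if $\mu_0 = \nu_0$ then $\phi(T) = 0$ for all $T$, so the paths coincide almost surely.

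The claim $\bar\mu,\bar\nu \in \mathcal S_T$ (the space referenced but whose definition is presumably given nearby, namely continuous $H^*$-valued processes with $\EE\sup_{t\le T}\|\cdot\|_{H^*}^2 < \infty$) follows from the a priori probability-measure bound: $\sup_{t\le T}\|\mu_t\|_{H^*} \le c$ deterministically, and continuity of the paths is part of the definition of a solution (the stochastic integral is continuous by the Proposition, and the Bochner drift integral is continuous since its integrand is bounded). The main obstacle I anticipate is not any single estimate but rather being careful that all the Hilbert-space stochastic calculus tools invoked — BDG in $H^*$, the Bochner-integral manipulations, predictability of $s \mapsto \sigma(\mu_s)$ and $s\mapsto b(\mu_s)$ — are legitimately available; predictability follows from continuity of the solution paths composed with the (Lipschitz, hence continuous) coefficient maps, and the integrability conditions needed to apply the Proposition are guaranteed by the uniform bound $\|\sigma(\mu_s)\|_{\HS} \le \|g\|_\infty \|\mu_s\|_{H^*} \le C c$ coming from the absolute-continuity part of Assumption 2.4. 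Everything else is the routine Picard/Gronwall machinery, with the only genuinely model-specific input being the reduction of the quadratic drift to a Lipschitz one via the boundedness of $I$ and the constraint $\mu_t \in \mathcal P$.
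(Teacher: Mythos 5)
Your proposal is correct and follows essentially the same route as the paper: a priori boundedness of $\|\mu\|_{H^*}$ on $\mathcal P$, reduction of the quadratic drift to a Lipschitz map on $\mathcal P$, the integral decomposition with $(x+y+z)^2\le 3(x^2+y^2+z^2)$, a Doob/BDG $L^2$ maximal inequality for the stochastic term, Cauchy--Schwarz in time for the drift (giving the $T^2$ in the exponent), and Gronwall. The only blemish is your claim $\|\rho\|_{\mathrm{op}}\le \max_{r\in I}|r|$, which is false for the Sobolev-type norm on $H$ (e.g.\ $\|\rho\mathbf 1\|_H^2=a+b$ while $\|\mathbf 1\|_H=1$); the paper computes the correct bound $\|\rho\|_{\mathrm{op}}\le\sqrt{2((a+b)^2+2(a+b)+2)}$, but since your argument only uses that $\rho$ is bounded, this affects only the unspecified constant $K$ and not the validity of the proof.
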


Let $(W_t)_{t \geq 0}$ be a Brownian motion defined cylindrically on $G$ with filtration $(\ff_t)_{t \geq 0}$ defined on some background probability space $(\Omega, \ff, \PP)$. Fix any $T \geq 0$ and define $\mathcal{S}_T$ to be the set of continuous $H^{*}$-valued processes $\bar{\mu} = (\mu_t)_{0 \leq t \leq T}$, adapted to filtration $(\ff_t)_{t \geq 0}$, that has finite $|\|\cdot\||_T$ norm. Where the norm on $\mathcal{S}_T$ is defined as
$$
|\| \bar{\mu} \| |_T := \big{\|}\sup_{t \leq T} \| \mu_t \| \big{\|}_2
$$
Where $\| X \|_2^2 = \EE(X^2)$ is the $L^2$-norm.
\begin{remark}
The continuous property of $\bar{\mu}$ allows us to make sense of Lebesgue integrals of $\bar{\mu}$ by defining:
$$
<\phi, \int_0^t \mu_s ds> := \int_0^t <\phi,\mu_s> ds
$$
since for any test function $\phi \in H$, the function $t \mapsto <\phi, \mu_t(\omega)>$ is bounded continuous and hence integrable.
\begin{align*}
| <\phi, \mu_t> | &\leq \| \phi \| \|\mu_t \| \leq \|\phi \| (\sup_{t \leq T} \| \mu_t \|) < \infty \quad \text{a.s.} \\
|<\phi, \mu_t> - <\phi, \mu_s>| &= |<\phi, \mu_t - \mu_s>| \leq \| \phi \| \| \mu_t - \mu_s \| \rightarrow 0 \quad \text{as} \quad t \rightarrow s
\end{align*}
\end{remark}
\begin{lemma}
For any $T \geq 0$, the metric space $(\mathcal{S}_T, |\| \cdot \||)$ is complete.
\end{lemma}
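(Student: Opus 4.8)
The argument is the standard completeness proof for a Bochner $L^2$-space, the only twist being to verify that adaptedness passes to the limit. Concretely, I would view $\mathcal{S}_T$ as a subspace of $L^2(\Omega; C([0,T];H^*))$: since $H$, and hence $H^*$, is a separable Hilbert space, $C([0,T];H^*)$ with the supremum norm is a Banach space, and the $L^2$-Bochner norm of the $C([0,T];H^*)$-valued random variable $\omega \mapsto (t \mapsto \mu_t(\omega))$ is precisely $|\|\bar\mu\||_T$. It then suffices to exhibit, for any $|\|\cdot\||_T$-Cauchy sequence in $\mathcal{S}_T$, a limit in $\mathcal{S}_T$.

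So let $(\bar\mu^{(n)})_{n\ge1}$ be Cauchy in $\mathcal{S}_T$. First I would extract a subsequence with $|\|\bar\mu^{(n_{k+1})} - \bar\mu^{(n_k)}\||_T \le 2^{-k}$. Since $\EE(X) \le \|X\|_2$ on a probability space by the Cauchy--Schwarz inequality, the monotone convergence theorem gives
$$
\EE\left( \sum_{k\ge1} \sup_{t\le T} \| \mu^{(n_{k+1})}_t - \mu^{(n_k)}_t \| \right) = \sum_{k\ge1} \EE\left( \sup_{t\le T} \| \mu^{(n_{k+1})}_t - \mu^{(n_k)}_t \| \right) \le \sum_{k\ge1} 2^{-k} < \infty ,
$$
so the nonnegative series inside the first expectation converges on an event $\Omega_0$ with $\PP(\Omega_0) = 1$. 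On $\Omega_0$ the partial sums of the telescoping series $\mu^{(n_1)}_\cdot + \sum_k (\mu^{(n_{k+1})}_\cdot - \mu^{(n_k)}_\cdot)$ form a Cauchy sequence in the Banach space $C([0,T];H^*)$, hence converge there; I would let $\mu_\cdot$ be this limit and set $\mu_\cdot := 0$ off $\Omega_0$. By construction $t \mapsto \mu_t$ has continuous paths, and $\mu^{(n_k)}_\cdot \to \mu_\cdot$ in $C([0,T];H^*)$ almost surely.

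It remains to check $\bar\mu := (\mu_t)_{0\le t\le T} \in \mathcal{S}_T$ and $\bar\mu^{(n)} \to \bar\mu$. For adaptedness, fix $t$: $\mu_t$ is the almost sure $H^*$-limit of the $\ff_t$-measurable random variables $\mu^{(n_k)}_t$, and is therefore $\ff_t$-measurable after the usual modification on a null set (or under the usual conditions on the filtration). Finiteness of $|\|\bar\mu\||_T$ and the desired convergence both come from Fatou's lemma:
$$
|\|\bar\mu\||_T^2 \le \liminf_k |\|\bar\mu^{(n_k)}\||_T^2 < \infty , \qquad |\|\bar\mu^{(n)} - \bar\mu\||_T^2 \le \liminf_k |\|\bar\mu^{(n)} - \bar\mu^{(n_k)}\||_T^2 ,
$$
the first being finite because Cauchy sequences are bounded, and the right-hand side of the second being arbitrarily small for large $n$ by the Cauchy property. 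No step here is genuinely hard; the only point needing a moment's care is the preservation of adaptedness, which is exactly why one works along an almost surely convergent subsequence rather than with an abstract $L^2$-limit.
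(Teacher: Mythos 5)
Your proof is correct and follows essentially the same route as the paper: extract a subsequence with summable increments, deduce almost sure uniform convergence to a continuous limit, and conclude with Fatou's lemma. Your packaging of the argument as completeness of $L^2(\Omega;C([0,T];H^*))$, and your explicit attention to adaptedness of the limit, are tidier than the paper's version but not substantively different.
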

\begin{proof}
Let $(\bar{\mu}^n)_n$ be Cauchy in $\mathcal{S}_T$. i.e. $|\| \bar{\mu}^n - \bar{\mu}^m \|| \rightarrow 0$ as $m,n \rightarrow 0$. We can find a subsequence $(n_k)_k$ such that
$$
\sum_k |\| \bar{\mu}^{n_{k+1}} - \bar{\mu}^{n_k} \|| < \infty
$$
By applying triangle inequality of $L^2$ norm, we have
$$
\| \sum_k \sup_{t\leq T} \| \mu_t^{n_{k+1}} - \mu_t^{n_k} \|_2 \leq \sum_k |\| \bar{\mu}^{n_{k+1}} - \bar{\mu}^{n_k} \|| < \infty
$$
Hence for almost every $\omega \in \Omega$,
$$
\sum_k \sup_{t\leq T} \| \mu_t^{n_{k+1}}(\omega) - \mu_t^{n_k}(\omega) \| < \infty
$$
Especially $(\mu_t^{n_k}(\omega))_k$ is Cauchy in $H^{*}$ for every $t$ and hence converges to some $\mu_t(\omega)$ by completeness of $H^{*}$. Fix any $\epsilon > 0$, we can also find some $j(\omega)$ such that
$$
\sum_{k=j(\omega)}^\infty \sup_{t\leq T} \| \mu_t^{n_{k+1}}(\omega) - \mu_t^{n_k}(\omega) \| < \tfrac{\epsilon}{2}
$$
Now given any $t \leq T$, since $\mu_t^{n_k}(\omega) \rightarrow \mu_t(\omega)$, there exist some constant $K(t,\omega)$ such that $\| \mu_t^{n_k}(\omega) - \mu_t(\omega) \| < \tfrac{\epsilon}{2}$, whenever $k > K(t,\omega)$. Then we have the following estimate for any $t \leq T$:
\begin{align*}
\| \mu_t^{n_{j(\omega)}}(\omega) - \mu_t(\omega) \| &\leq \sum_{k=j}^{K(t,\omega)} \| \mu_t^{n_{k+1}}(\omega) - \mu_t^{n_k}(\omega) \| + \| \mu_t^{n_{K(t,\omega)}+1}(\omega) - \mu_t(\omega) \| \\
                                          &< \tfrac{\epsilon}{2} + \tfrac{\epsilon}{2} < \epsilon
\end{align*}
In other word, we've found a constant $n_{j(\omega)}$ such that
$$
\sup_{t \leq T} \| \mu_t^{n_{j(\omega)}}(\omega) - \mu_t(\omega) \| < \epsilon
$$
Hence
\begin{align*}
\| \mu_t(\omega) - \mu_s(\omega) \| &\leq  \| \mu_t(\omega) - \mu_t^{n_{j(\omega)}}(\omega) \| + \| \mu_t^{n_{j(\omega)}}(\omega) - \mu_s^{n_{j(\omega)}}(\omega) \| + \| \mu_s^{n_{j(\omega)}}(\omega) - \mu_s(\omega) \| \\
                                    &\leq 2 \sup_{t \leq T} \| \mu_t^{n_{j(\omega)}}(\omega) - \mu_t(\omega) \| + \| \mu_t^{n_{j(\omega)}}(\omega) - \mu_s^{n_{j(\omega)}}(\omega) \| \\
                                    &< 3\epsilon \quad \text{if $s,t$ are close enough}
\end{align*}
Hence $\bar{\mu} = (\mu_t)_{t \leq T}$ is a.s continuous. i.e. $\bar{\mu} \in \mathcal{S}_T$ and it remains to show that this is indeed the limit of the original sequance $(\bar{\mu}^n)_n$, but
\begin{align*}
|\| \bar{\mu}^n - \bar{\mu} \||^2 &= \EE \left( \sup_{t \leq T} \| \mu_t^n - \mu_t \|^2 \right) \\
                                  &= \EE \left( \sup_{t \leq T} \liminf_{k \rightarrow \infty}  \| \mu_t^n - \mu_t^{n_k} \|^2 \right) \\
                                  &\leq \liminf_{k \rightarrow \infty} \EE \left( \sup_{t \leq T} \| \mu_t^n - \mu_t^{n_k} \|^2 \right) \quad \text{Fatou's lemma} \\
                                  &= \liminf_{k \rightarrow \infty} |\| \bar{\mu}^n - \bar{\mu}^{n_k} \||^2 \quad \rightarrow 0 \quad \text{as } n \rightarrow \infty
\end{align*}
because $(\bar{\mu}^n)_n$ is a Cauchy sequence in $\mathcal{S}_T$.
\end{proof}

Suppose now $\bar{\mu} = (\mu_t)_{t \geq 0}$ and $\bar{\nu} = (\nu_t)_{t \geq 0}$ are two solutions to SDE \eqref{eq:evol1} such that $\mu_t, \nu_t \in \mathcal{P}$ for all $t$. Then we have the following estimate:
\begin{align*}
\| \mu_t - \nu_t \|^2 &= \| \mu_0 -\nu_0 + \int_0^t F(\mu_s) - F(\nu_s) ds + \int_0^t \sigma(\mu_s) - \sigma(\nu_s) dW_s \|^2 \\
                      &\leq 3 \left(\|\mu_0 -\nu_0\|^2 + \| \int_0^t F(\mu_s) - F(\nu_s) ds \|^2 + \| \int_0^t \sigma(\mu_s) - \sigma(\nu_s) dW_s \|^2 \right)
\end{align*}
Where $F(\mu):= \left( R(\mu) -\rho^* \right) \mu$. Hence we have:
\begin{equation}\label{eq:esti1}
|\| \bar{\mu} - \bar{\nu} \||_T^2 \leq 3\left(\| \mu_0 - \nu_0 \|^2 + |\| \int_0^{\cdot} F(\mu_s) - F(\nu_s) ds \||_T^2 + |\| \int_0^{\cdot} \sigma(\mu_s) - \sigma(\nu_s) dW_s \||_T^2 \right)
\end{equation}
\vspace{2ex}

First we estimate the final stochastic term of \eqref{eq:esti1}. Notice that if $\bar{\mu} \in \mathcal{S}_T$ for some $T$, then $M_t = \int_0^t \sigma(\mu_s) dW_s$ is a local martingale. The expectation of quadratic variation is then
\begin{align*}
\EE [M]_T &= \EE \int_0^T \| \sigma(\mu_s) \|^2 ds \quad \text{Ito's isometry} \\
          &\leq 2 \EE \int_0^T \| \sigma(0) \|^2 + C^2 \|\mu_s\|^2 ds \quad \text{Lipschitz property}\\
          &\leq 2T(\| \sigma(0) \|^2 + C^2 |\|\bar{\mu} \||_T^2) < \infty
\end{align*}
Hence $(M_t)_{t \leq T}$ is a true martingale and bounded in $L^2$.
\begin{theorem}[Doob's $L^2$ inequality]\label{th:Doobs}
Let $H$ be a separable Hilbert space and $M = (M_t)_{t \geq 0}$ be a $H$-valued martingale, then
$$
\EE \| \sup_{t \leq T} M_t \|^2_H \leq 4 \EE \|M_T\|^2_H
$$
\end{theorem}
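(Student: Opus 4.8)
The plan is to reduce the Hilbert-space inequality to the classical scalar Doob $L^2$ maximal inequality by passing to the real-valued process $N_t := \| M_t \|_H$. The first step is to check that $N$ is a non-negative submartingale for $(\ff_t)$. Non-negativity and adaptedness are immediate, and integrability holds provided $\EE \|M_T\|_H^2 < \infty$, which we may assume since otherwise there is nothing to prove; indeed $x \mapsto x^2$ is convex and non-decreasing on $[0,\infty)$, so $N^2$ is then also a submartingale and $\EE N_t^2 \le \EE N_T^2 < \infty$ for all $t \le T$. The submartingale property $\EE(N_t \mid \ff_s) \ge N_s$ for $s \le t$ is a conditional Jensen inequality: since $H$ is separable we may pick a countable family $(f_k)$ in the unit ball of $H$ with $\| x \|_H = \sup_k \langle f_k, x \rangle_H$ for every $x$, and then for each $k$,
$$
\langle f_k, M_s \rangle_H = \langle f_k, \EE(M_t \mid \ff_s) \rangle_H = \EE\big( \langle f_k, M_t \rangle_H \,\big|\, \ff_s \big) \le \EE\big( \| M_t \|_H \,\big|\, \ff_s \big),
$$
using that the Bochner conditional expectation commutes with bounded linear functionals; taking the supremum over $k$ gives $N_s \le \EE(N_t \mid \ff_s)$.

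The second step is to apply the scalar Doob $L^2$ inequality to the non-negative submartingale $N$, which yields
$$
\EE\Big( \sup_{t \le T} N_t^2 \Big) \le 4\, \EE N_T^2 ,
$$
exactly the claimed bound $\EE\big(\sup_{t\le T}\|M_t\|_H^2\big) \le 4\,\EE\|M_T\|_H^2$. If one wishes to state the result for a martingale without assuming path regularity, one first applies the scalar inequality to $\sup_{t \in D} N_t$ over a countable dense set $D \subset [0,T]$ with $T \in D$, and then, in the present setting where $M$ is continuous, passes from $D$ to $[0,T]$ by right-continuity of $N$ together with monotone convergence; this also takes care of the measurability of $\sup_{t\le T}\|M_t\|_H$.

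I expect the only point genuinely requiring care to be the conditional Jensen step, i.e. the inequality $\| \EE(X \mid \ff_s) \|_H \le \EE( \| X \|_H \mid \ff_s )$ for Bochner-integrable $H$-valued $X$. This is precisely where separability of $H$ enters: it lets us realise the norm as a \emph{countable} supremum of continuous linear functionals, so that the a priori uncountable supremum appearing in the definition of the dual norm causes no measurability trouble and the one-dimensional argument transcribes directly. Everything else is routine.
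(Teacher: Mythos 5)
Your proof is correct: the reduction of the Hilbert-space inequality to the scalar Doob $L^2$ inequality via the non-negative submartingale $N_t = \|M_t\|_H$, with the conditional Jensen step justified by writing the norm as a countable supremum of linear functionals (using separability) and the commutation of Bochner conditional expectation with bounded functionals, is the standard argument and all steps check out. The paper itself states this result without proof, treating it as a known fact from Hilbert-space martingale theory, so there is nothing to compare beyond noting that your argument is the expected one; your reading of the left-hand side as $\EE\bigl(\sup_{t\le T}\|M_t\|_H^2\bigr)$ is also the intended interpretation of the slightly abusive notation in the statement.
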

By applying above theorem, we have
\begin{align*}
|\| \int_0^{\cdot} \sigma(\mu_s) - \sigma(\nu_s) dW_s \||_T^2 &= \EE \left( \sup_{t \leq T}  \int_0^t \| \sigma(\mu_s) - \sigma(\nu_s) \|^2 ds\right) \\
                                                              &\leq 4\EE \int_0^T \| \sigma(\mu_s) - \sigma(\nu_s) \|^2 ds \\
                                                              &\leq 4C^2 \int_0^T \EE \| \mu_s -\nu_s \|^2 ds \\
                                                              &\leq 4C^2 \int_0^T |\| \bar{\mu} - \bar{\nu} \||_s^2 ds \\
\end{align*}

Now we estimate the second term of \eqref{eq:esti1}. Notice that if $\bar{m} = (m_s)_{s \leq T} \in \mathcal{S}_T$, then
\begin{align*}
\| \int_0^t m_s ds \| &= \sup_{\substack{
   h \in H \\
   \| h \|_H =1
  }} | < h, \int_0^t m_s ds>| \\
                      &\leq \sup_{\substack{
   h \in H \\
   \| h \|_H =1
  }} \int_0^t |<h,m_s>| ds \\
                      &\leq \int_0^t \|m_s \| ds
\end{align*}
\begin{lemma}
For any $\mu, \nu \in H^{*}$,
$$
\| F(\mu) - F(\nu) \| \leq \sqrt{2(a+b)} \|\mu + \nu \| \|\mu - \nu \| + \sqrt{2((a+b)^2+2(a+b)+2)} \|\mu - \nu \|
$$
In particular, if $\mu, \nu \in \mathcal{P}$ are probability measures on interval $I = [-a,b]$, we have
$$
\| F(\mu) - F(\nu) \| \leq C_1 \|\mu - \nu \|
$$
where $C_1 = 2\sqrt{2(a+b)(1+a+b)} + \sqrt{2((a+b)^2+2(a+b)+2)}$.
\end{lemma}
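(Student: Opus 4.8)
\textit{Step 2: Lipschitz estimate for the drift}

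The plan is to split $F(\mu)-F(\nu)$ into its quadratic part and its linear part and estimate each separately. Using the linearity of $R$ and of $\rho^*$, write
$$
F(\mu)-F(\nu) = \bigl(R(\mu)\mu - R(\nu)\nu\bigr) - \rho^*(\mu-\nu).
$$
For the quadratic part I would invoke the symmetrisation identity
$$
R(\mu)\mu - R(\nu)\nu = \tfrac12\,R(\mu+\nu)\,(\mu-\nu) + \tfrac12\,R(\mu-\nu)\,(\mu+\nu),
$$
so that after the triangle inequality in $H^*$ all that remains is to control $|R(\eta)|$ for $\eta\in\{\mu+\nu,\ \mu-\nu\}$. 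Since $R(\eta) = \langle\rho^*\eta,\mathbf 1\rangle_{H^*,H} = \langle\eta,\rho\mathbf 1\rangle_{H^*,H}$ and $(\rho\mathbf 1)(r)=r$, Cauchy--Schwarz for the duality pairing gives $|R(\eta)|\le \|\eta\|_{H^*}\,\|\rho\mathbf 1\|_H$, where $\|\rho\mathbf 1\|_H$ is read off directly from the definition of $\|\cdot\|_H$ (here $(\rho\mathbf 1)(0)=0$ and $(\rho\mathbf 1)'\equiv 1$). This produces a bound $\lesssim \|\mu+\nu\|_{H^*}\,\|\mu-\nu\|_{H^*}$ with the explicit constant $\sqrt{2(a+b)}$ claimed in the statement.

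For the linear part, $\|\rho^*(\mu-\nu)\|_{H^*}\le \|\rho^*\|_{\mathrm{op}}\,\|\mu-\nu\|_{H^*}$ and $\|\rho^*\|_{\mathrm{op}}=\|\rho\|_{\mathrm{op}}$, the operator norm of $\rho$ on $H$. The only genuine analytic step is to bound $\|\rho\|_{\mathrm{op}}$ explicitly. Starting from $\|\rho\varphi\|_H^2 = \int_I(\varphi(r)+r\varphi'(r))^2\,dr$ (since $(\rho\varphi)(0)=0$), I would expand the square and either integrate the cross term $\int_I r(\varphi^2)'\,dr$ by parts, reducing it to the boundary values $\varphi(-a)^2,\varphi(b)^2$ and $\int_I r^2(\varphi')^2\,dr$, or use the crude split $(\varphi+r\varphi')^2\le 2\varphi^2+2r^2(\varphi')^2$. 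In either case $\int_I r^2(\varphi')^2\le (a+b)^2\|\varphi\|_H^2$, and both $\int_I\varphi^2$ and the boundary values are controlled by the elementary Sobolev-type estimate $\varphi(r)^2\le (1+|r|)\|\varphi\|_H^2$, which comes from $\varphi(r)=\varphi(0)+\int_0^r\varphi'$ and Cauchy--Schwarz. Bookkeeping the constants over $I=[-a,b]$ and using $|r|\le a+b$ on $I$ yields the coefficient $\sqrt{2((a+b)^2+2(a+b)+2)}$ of $\|\mu-\nu\|_{H^*}$. Adding the two parts gives the first displayed inequality.

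The second assertion is then immediate. For a probability measure $\mu$ on $I=[-a,b]$ the explicit norm formula for $H^*$ gives
$$
\|\mu\|_{H^*}^2 = 1 + \int_{-a}^0 \mu[-a,r)^2\,dr + \int_0^b \mu(r,b]^2\,dr \le 1+a+b,
$$
since $\mu[-a,r),\mu(r,b]\in[0,1]$; hence $\|\mu+\nu\|_{H^*}\le\|\mu\|_{H^*}+\|\nu\|_{H^*}\le 2\sqrt{1+a+b}$ whenever $\mu,\nu\in\mathcal P$. Substituting this into the first inequality turns the quadratic term into $2\sqrt{2(a+b)(1+a+b)}\,\|\mu-\nu\|_{H^*}$, and collecting terms gives exactly $C_1=2\sqrt{2(a+b)(1+a+b)}+\sqrt{2((a+b)^2+2(a+b)+2)}$.

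I expect the operator-norm estimate for $\rho$ to be the only real obstacle: it is where one must pass from algebra to a genuine inequality, and recovering the stated explicit constant requires being a little careful about how the term $r\varphi'(r)$ is handled (the integration-by-parts version of the cross term is cleanest and in fact gives room to spare). The symmetrisation identity, the Cauchy--Schwarz bound on $R$, and the a priori bound on $\|\mu\|_{H^*}$ for probability measures are all routine.
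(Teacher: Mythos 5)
Your proposal follows essentially the same route as the paper: the same split of $F(\mu)-F(\nu)$ into the quadratic part (handled by the polarisation identity and the bound $|R(\eta)|\le\sqrt{a+b}\,\|\eta\|_{H^*}$ coming from $\|\rho\mathbf 1\|_H^2=a+b$) and the linear part (handled by bounding the operator norm of $\rho$ via $(\rho\varphi)'=\varphi+r\varphi'$ and a pointwise Sobolev-type estimate), together with $\|\mu\|_{H^*}^2\le 1+a+b$ for $\mu\in\mathcal P$. The only caveat is the one you anticipate: the naive bookkeeping does not land exactly on the constant $\sqrt{2((a+b)^2+2(a+b)+2)}$ --- the paper's own derivation has the same looseness (it appears to drop a factor of $(a+b)$ when integrating $h^2$ over $I$) --- but the qualitative Lipschitz conclusion on $\mathcal P$, which is all that is used later, is unaffected.
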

\begin{proof}
By previous notation $R(\mu) = \int_I r \mu(dr)$ is linear in $\mu$ and
$$
|R(\mu)| = |<r,\mu>| \leq \|r\|_H \|\mu\|_{H^{*}} = \sqrt{a+b} \|\mu\|
$$
Now define $(\mu \ast \nu)(dr) := R(\mu) \nu (dr)$, a bilinear binary operation.
\begin{align*}
\| \mu \ast \nu\| &= \sup_{\|h\| = 1} |< h, \mu \ast \nu>| \\
                  &= \sup_{\|h\| = 1} |R(\mu)| |<h,\nu>| \\
                  &\leq \sqrt{a+b} \|\mu\| \|\nu\|
\end{align*}
For any $h(r) \in H$, we have $(rh(r))' = h(r) + rh'(r)$ and
\begin{align*}
\|rh(r)\|^2_H &= \int_{-a}^b \left( h(r) + rh'(r) \right)^2 dr \\
              &\leq 2\left( \int_{-a}^b h^2(r) dr + \int_{-a}^b r^2 (h'(r))^2 dr \right)
\end{align*}
The second term can be bounded easily by $(a+b)^2 \|h\|^2$, for the first term, we have:
\begin{align*}
h(r) &= h(0) + \int_0^r h'(s)ds \\
h^2(r) &\leq 2 \left( h^2(0) + \left( \int_0^r h'(s) ds \right)^2 \right) \\
       &\leq 2 \left( h^2(0) + r \int_0^r (h'(s))^2 ds \right) \quad \text{Cauchy-Schwarz inequality} \\
       &\leq 2(a+b+1)\|h\|^2
\end{align*}
Combing the two terms gives:
$$
\| rh(r)\|^2 \leq 2((a+b)^2+2(a+b)+2)\|h\|^2
$$
Hence for any $\mu \in H^{*}$
\begin{align*}
\|\rho^* \mu\| &= \sup_{\|h\| = 1} |<h(r), \rho^* \mu>| \\
         &= \sup_{\|h\| = 1} |<rh(r), \mu>| \\
         &\leq \sup_{\|h\| = 1} \|rh(r)\| \|\mu\| \\
         &\leq \sqrt{2((a+b)^2+2(a+b)+2)} \|\mu\|
\end{align*}
Finally grouping all the above estimates gives:
\begin{align*}
\|F(\mu) - F(\nu) \| &= \| \mu \ast \mu - \nu \ast \nu + \rho^* (\mu - \nu) \| \\
                     &\leq \Big{\|} \frac{(\mu + \nu) \ast (\mu - \nu) + \mu - \nu) \ast (\mu + \nu)}{2} \Big{\|} + \|\rho^* (\mu - \nu)\| \\
                     &\leq \sqrt{2(a+b)} \|\mu + \nu \| \|\mu - \nu \| + \sqrt{2((a+b)^2+2(a+b)+2)} \|\mu - \nu \|
\end{align*}
Notice that $\|\mu + \nu\| \leq \|\mu\| + \|\nu\|$ and for probability measures we have:
$$
||\mu||^2_{H^{*}} := \mu(I)^2 + \int_0^b \mu(r,N]^2 dr + \int_{-a}^0 \mu[-N,r)^2 dr \leq 1+a+b
$$
Which gives the final assertion of the lemma.
\end{proof}

Back to the estimate of the second term of \eqref{eq:esti1}:
\begin{align*}
|\| \int_0^{\cdot} F(\mu_s) - F(\nu_s) ds \||_T^2 &= \EE \sup_{t \leq T} \| \int_0^t F(\mu_s) - F(\nu_s) ds \|^2 \\
                                                  &\leq \EE \sup_{t \leq T} \left( \int_0^t \|F(\mu_s) - F(\nu_s)\| ds \right)^2 \\
                                                  &\leq \EE T \int_0^T \|F(\mu_s) - F(\nu_s)\|^2 ds \\
                                                  &\leq C_1^2 T \int_0^T |\| \bar{\mu} - \bar{\nu} \||_s^2 ds
\end{align*}
Hence the estimate \eqref{eq:esti1} takes the final form
\begin{equation}\label{eq:esti2}
|\| \bar{\mu} - \bar{\nu} \||_T^2 \leq 3\left(\| \mu_0 - \nu_0 \|^2 + C_2 \int_0^T |\| \bar{\mu} - \bar{\nu} \||_s^2 ds \right)
\end{equation}
Where $C_2 = 4C^2 + C_1^2 T$. Here we can use Gronwall's lemma, which states:
\begin{lemma}[Gronwall's lemma]
Let $T > 0$ and let $f$ be a non-negative bounded measurable function on $[0,T]$. Suppose that for some $\alpha,\beta \geq 0$:
$$
f(t) \leq \alpha + \beta \int_0^t f(s) ds \quad 0 \leq t \leq T.
$$
Then $f(t) \leq \alpha e^{\beta t}$ for all $t \in [0,T]$.
\end{lemma}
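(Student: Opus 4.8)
The plan is to prove Gronwall's lemma by iterating (bootstrapping) the integral inequality finitely many times and then passing to the limit, using the boundedness of $f$ only to control the error term. Set $M := \sup_{0 \le t \le T} f(t)$, which is finite by hypothesis. I claim that for every integer $n \ge 0$ and every $t \in [0,T]$,
\[
f(t) \;\le\; \alpha \sum_{k=0}^{n-1} \frac{(\beta t)^k}{k!} \;+\; M\,\frac{(\beta t)^n}{n!},
\]
with the convention that the empty sum (the case $n=0$) is zero.

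I would establish this by induction on $n$. The case $n=0$ is just the bound $f(t) \le M$, which holds by definition of $M$. For the inductive step, assume the bound holds with exponent $n$; substituting it into the right-hand side of the hypothesis $f(t) \le \alpha + \beta \int_0^t f(s)\,ds$ (the substitution is legitimate since $\beta \ge 0$ and $f$ is non-negative, bounded and measurable, so all the integrals are finite) and integrating term by term using $\int_0^t s^k\,ds = t^{k+1}/(k+1)$, one finds that every index is shifted up by one while the leftover constant $\alpha$ supplies the $k=0$ term, producing exactly the asserted bound with exponent $n+1$. The only thing to watch is the index bookkeeping; there is no analytic subtlety.

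Finally, I would let $n \to \infty$ in the claim. The remainder $M (\beta t)^n / n!$ tends to $0$ (uniformly for $t \in [0,T]$), while the partial sums $\sum_{k=0}^{n-1} (\beta t)^k/k!$ increase to $e^{\beta t}$, so $f(t) \le \alpha e^{\beta t}$ for every $t \in [0,T]$, which is the assertion; the cases $\alpha = 0$ and $\beta = 0$ are covered automatically. This lemma is elementary and there is no genuine obstacle: the single point that requires care is that one cannot simply drop the remainder before letting $n \to \infty$, and the boundedness hypothesis on $f$ is precisely what forces the remainder to vanish. An alternative, shorter route is to note that $g(t) := \alpha + \beta\int_0^t f(s)\,ds$ is absolutely continuous with $g' = \beta f \le \beta g$ almost everywhere, whence $\frac{d}{dt}\bigl(e^{-\beta t} g(t)\bigr) \le 0$ and $g(t) \le g(0) e^{\beta t} = \alpha e^{\beta t}$, giving $f \le g \le \alpha e^{\beta t}$; this is slicker but invokes absolute continuity of indefinite integrals, which the iteration argument sidesteps.
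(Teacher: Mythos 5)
Your proof is correct: the iterated integral inequality with the explicit remainder $M(\beta t)^n/n!$ is the standard bootstrap argument, the induction is sound, and the boundedness hypothesis is used exactly where it is needed, namely to kill the remainder as $n \to \infty$. The paper states this lemma without proof (it is quoted as a standard fact before being applied to $f(t) = |\|\bar\mu - \bar\nu\||_t^2$), so there is nothing to compare against; your argument is a complete and self-contained justification.
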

Hence by setting $f(t) = |\| \bar{\mu} - \bar{\nu} \||_t^2$, we get
$$
|\| \bar{\mu} - \bar{\nu} \||_T^2 \leq 3\| \mu_0 - \nu_0 \|^2 \exp(3C_2 T) = 3\| \mu_0 - \nu_0 \|^2 \exp(12C^2T + 3C_1^2 T^2)
$$
and hence we've showed the inequality in theorem \ref{th:solesti}.

It remains to verify the uniqueness assertion. If $\bar{\mu} = (\mu_t)_{t \geq 0}$ and $\bar{\nu} = (\nu_t)_{t \geq 0}$ are two solutions to \eqref{eq:evol1} with the same initial condition $\mu_0$. Then by Assumption \ref{as:1}, they must be $\mathcal P$-valued. Also since $\bar{\mu}, \bar{\nu}$ start at the same initial point,
$$
\EE( \sup_{0 \le t \le T} \| \mu_t- \nu_t\|_{H^*}^2 ) = 0 \quad \forall T > 0
$$
and hence $\mu_t = \nu_t$ for all $t \geq 0$

\vspace{3ex}
\noindent
\textit{Step 2: Existence and uniqueness in the atomic probability measure case:}

We call a (signed) measure $\mu$ atomic if it takes the form:
$$
\mu = \sum_{i=1}^k X^i \delta_{r_i}
$$
where $X_i, r_i \in \RR$ and $\delta_{r_i}$ is the dirac-delta measure concentrated at $r_i \in \RR$.
If in addition, $X^i > 0$ and $\sum_{i=1}^k X^i = 1$. Then $\mu$ is an atomic probability measure.

In this subsection, we are going to show
\begin{theorem}\label{th:ATOMEX}
Under Assumption \ref{as:1}, given any initial atomic probability measure $\mu_0$, there exists a solution $(\mu_t)_{t \geq 0}$ to the SDE \eqref{eq:evol1}
\end{theorem}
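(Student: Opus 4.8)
\medskip
\noindent\emph{Proof strategy.}
The plan is to look for a solution that remains supported on the $k$ atoms $r_1,\dots,r_k$ of $\mu_0=\sum_{i=1}^k X^i_0\delta_{r_i}$, so that the SDE \eqref{eq:evol1} reduces to a finite-dimensional SDE for the vector of weights $X_t=(X^1_t,\dots,X^k_t)$. By the Absolute continuity part of Assumption~\ref{as:1} we have $\sigma(\mu)(dr)=g(\mu,r)\,\mu(dr)$, so that $\big(\sigma(\mu)h\big)(dr)=\langle g(\mu,r),h\rangle_G\,\mu(dr)$ for every $h\in G$; hence for an atomic $\mu=\sum_i x^i\delta_{r_i}$ the drift $\big(R(\mu)-\rho^*\big)\mu=\sum_i x^i\big(R(\mu)-r_i\big)\delta_{r_i}$ and the noise $\sigma(\mu)h=\sum_i x^i\langle g(\mu,r_i),h\rangle_G\,\delta_{r_i}$ are again carried by $\{r_1,\dots,r_k\}$. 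Since the distinct Dirac masses $\delta_{r_1},\dots,\delta_{r_k}$ are linearly independent in $H^*$ and span a $k$-dimensional subspace on which $\|\cdot\|_{H^*}$ is equivalent to the Euclidean norm of the coefficient vector, a continuous $\mathcal P$-valued process of the form $\mu_t=\sum_i X^i_t\delta_{r_i}$ solves \eqref{eq:evol1} if and only if its (automatically $\Delta$-valued) weights solve
\begin{equation}\label{eq:atomsde}
dX^i_t=X^i_t\left(\sum_j r_jX^j_t-r_i\right)dt+X^i_t\,\langle g(\mu_t,r_i),dW_t\rangle_G,\qquad 1\le i\le k,
\end{equation}
where $\Delta=\{x\in\RR^k:x^i>0,\ \sum_i x^i=1\}$ and the last term is a scalar stochastic integral against the cylindrical Brownian motion with quadratic variation $\int_0^t\|g(\mu_s,r_i)\|_G^2\,ds\le C^2t$.

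The only obstacle is that $\sigma$, and hence $g(\cdot,r_i)$, is given only on $\mathcal P$, so \eqref{eq:atomsde} has no coefficients off $\Delta$. I would remedy this on the open set $U=\{x\in\RR^k:x^i>0\text{ for all }i\}\ni X_0$ by replacing, wherever $\mu_t$ enters $\sigma$ or $g$, the vector $X_t$ by its renormalisation $\bar X_t:=X_t/\sum_j X^j_t$, which maps $U$ smoothly onto the relative interior of $\Delta$; put $\bar\mu_x:=\sum_j\bar x^j\delta_{r_j}\in\mathcal P$. The resulting $\RR^k$-valued SDE on $U$ has a polynomial, hence locally Lipschitz, drift, and its $\HS(G,\RR^k)$-valued diffusion coefficient $\Sigma$ satisfies $\Sigma(x)=\big(\sum_j x^j\big)\,\iota\circ\sigma(\bar\mu_x)$, where $\iota:\mathrm{span}\{\delta_{r_1},\dots,\delta_{r_k}\}\to\RR^k$ is the fixed bi-Lipschitz identification; since $\sigma$ is Lipschitz and bounded on the bounded set $\Delta$ and the renormalisation is smooth on $U$, the coefficient $\Sigma$ is locally Lipschitz on $U$ with $\|\Sigma(x)\|_{\HS}$ of at most linear growth in $|x|$ (using $\|g\|_G\le C$). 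Standard finite-dimensional stochastic calculus — equivalently, the Hilbert space integration theory recalled above with state space $\RR^k$ — then produces a unique maximal solution $(X_t)_{0\le t<\tau}$ taking values in $U$.

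It remains to show that $X$ never leaves $\Delta$ and that $\tau=\infty$. For the total mass $S_t:=\sum_i X^i_t$ the Centering condition gives $\sum_i\bar X^i_t\,g(\bar\mu_{X_t},r_i)=\sigma(\bar\mu_{X_t})^*\mathbf 1=0$, whence $\sum_i X^i_t\,g(\bar\mu_{X_t},r_i)=S_t\cdot 0=0$, so the martingale part of $S$ vanishes and $dS_t=\big(\sum_j r_jX^j_t\big)(S_t-1)\,dt$; since $S_0=1$ this forces $S_t\equiv 1$ on $[0,\tau)$. As the maximal solution is $U$-valued we have $X^i_t>0$ there, and together with $S_t\equiv1$ this gives $X_t\in\Delta$, so $\bar\mu_{X_t}=\mu_t$ and the modified equation reduces to \eqref{eq:atomsde}. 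Writing $X^i_t$ as the Dol\'eans exponential of the continuous semimartingale $t\mapsto\int_0^t\big(\sum_j r_jX^j_s-r_i\big)\,ds+\int_0^t\langle g(\mu_s,r_i),dW_s\rangle$, and noting that on $[0,\tau\wedge T)$ its finite-variation part is bounded in modulus by $(\Lambda+\tfrac12 C^2)T$ with $\Lambda:=\max_j r_j-\min_j r_j$, while the local-martingale part — of quadratic variation $\le C^2T$ — has an a.s.\ finite supremum $L_T$, one gets $X^i_t\ge X^i_0\,e^{-(\Lambda+\frac12 C^2)T-L_T}>0$ on $[0,\tau\wedge T)$. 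Hence $X_t$ remains in a compact subset of $U$ on $[0,\tau\wedge T)$, which is incompatible with $\tau\le T$; letting $T\uparrow\infty$ yields $\tau=\infty$. Then $\mu_t:=\sum_i X^i_t\delta_{r_i}$ is a global continuous $\mathcal P$-valued process solving \eqref{eq:evol1}, uniqueness already being provided by Theorem~\ref{th:solesti}. I expect the renormalisation device — and the fact that it simultaneously makes positivity structural and leaves intact the Centering-driven conservation of mass — to be the one genuinely delicate point; the rest is routine bookkeeping with the linear structure of \eqref{eq:atomsde}.
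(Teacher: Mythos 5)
Your proposal is correct, and its engine is the same as the paper's: reduce to a finite-dimensional SDE for the weight vector, use the Centering condition to kill the martingale part of the total mass, and represent each weight as a stochastic (Dol\'eans) exponential to get strict positivity. Where you genuinely diverge is in how you cope with the coefficients being defined only on the simplex. The paper (Proposition \ref{th:atomprop}) extends $b$ and $\sigma$ to all of $\RR^n$ by composing with the $1$-Lipschitz orthogonal projection $\Pi$ onto the closed simplex $\mathcal Q$, so It\^o's theorem for globally Lipschitz coefficients gives a global solution at the outset, and invariance of $\mathcal Q$ is an after-the-fact verification; you instead renormalise by the total mass on the open orthant $U$, obtain only a maximal local solution, and must run a non-exit argument (mass conservation plus the exponential lower bound confines the path to a random compact subset of $U$ on $[0,\tau\wedge T)$, contradicting $\tau\le T$). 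Both work. The projection route buys global existence for free and also covers degenerate initial data with some zero weights, which the paper handles by restricting to a sub-simplex and which your orthant $U$ excludes --- harmless here only because the paper's definition of an atomic probability measure requires strictly positive weights. Conversely, your renormalisation makes it transparent that the modified equation coincides with the original one on $\{S_t=1\}$, whereas with the projection one must note that $\Pi(X_s)=X_s$ before the first exit time in order to write $b_i(\Pi(X_s))=X^i_s\,c_i(X_s)$. One point you compress: the claim that the induced coefficients are (locally) Lipschitz in the weight vector rests on the equivalence between $\left\| \sum_i z_i \delta_{r_i}\right\|$ and $\sum_i \|z_i\|_G$ on the span of finitely many Dirac masses; this is exactly the paper's Lemma \ref{th:Lip-finite} and Proposition \ref{th:Lip}, and your phrase ``fixed bi-Lipschitz identification'' is silently carrying that lemma's weight, so it deserves an explicit statement.
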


\begin{proposition}\label{th:atomprop}
Fix $n \ge 1$, and let
$$
\mathcal Q = \left\{ x= (x_1, \ldots, x_n): x_i \ge 0 \mbox{ for all } i, \sum_{i=1}^n x_i = 1 \right\}
$$
For $1 \le i \le n$, let $b_i: \mathcal Q \to \RR$ and $\sigma_i : \mathcal Q \to G$ be Lipschitz functions such that
$$
\sum_{i=1}^n b_i(x) = 0 \mbox{ and } \sum_{i=1}^n \sigma_i(x) = 0
$$
for all $x \in \mathcal Q$ and such there exists a constant $C  > 0$ such that
$$
| b_i(x) | + \|\sigma_i(x) \| \le C  x_i
$$
for all $x \in \mathcal Q$ where $x=(x_1, \ldots, x_n)$.

Then for every $\xi \in \mathcal Q$ there exists
unique adapted process $(X_t)_{t \ge 0}$ taking values
in $\mathcal Q$ such that $X_0= \xi$ and
$$
dX_t = b(X_t) dt + \sigma(X_t) dW_t.
$$
\end{proposition}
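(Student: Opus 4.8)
The plan is to solve the SDE on all of $\RR^n$ first, using a standard Lipschitz existence-uniqueness theorem, and then show that the solution started in $\mathcal Q$ never leaves $\mathcal Q$. To do this I would first extend $b_i$ and $\sigma_i$ from $\mathcal Q$ to globally Lipschitz maps $\tilde b_i : \RR^n \to \RR$ and $\tilde\sigma_i : \RR^n \to G$ (for instance by composing with the nearest-point projection onto the compact convex set $\mathcal Q$, which is $1$-Lipschitz, and noting that the estimate $|b_i(x)|+\|\sigma_i(x)\| \le C x_i$ is preserved along the projection in the weak sense we shall actually need, namely that it holds for $x \in \mathcal Q$). The classical theorem on SDEs with Lipschitz coefficients (e.g. in the form used in \cite{daprato-zabzcyck}, here in finite dimension driven by a cylindrical $W$ in $G$, which only sees the Hilbert--Schmidt norm $\sum_i \|\tilde\sigma_i(x)\|_G^2$) then gives a unique strong solution $(X_t)_{t \ge 0}$ in $\RR^n$ with $X_0 = \xi$; once we show this solution stays in $\mathcal Q$, it solves the original equation and uniqueness is inherited.

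Next I would verify that the affine constraint $\sum_i X_t^i = 1$ is preserved: summing the coordinate SDEs and using $\sum_i b_i = \sum_i \sigma_i = 0$ on $\mathcal Q$ gives $d\big(\sum_i X_t^i\big) = 0$ as long as $X_t \in \mathcal Q$, so $\sum_i X_t^i \equiv 1$ up to the exit time from $\mathcal Q$; combined with the non-negativity argument below this closes up. The heart of the matter is non-negativity of each coordinate. Here I would use the bound $|b_i(x)| + \|\sigma_i(x)\|_G \le C x_i$: it says that both the drift and diffusion of $X^i$ vanish on $\{x_i = 0\}$, so the boundary hyperplanes $\{x_i = 0\}$ are non-attracting from the positive side. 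The clean way to run this is a comparison/localisation argument. Fix $i$ and consider $Y_t = X_t^i \wedge 0$, or better, for $\varepsilon>0$ apply It\^o's formula to a $C^2$ convex approximation $\phi_\varepsilon$ of $x \mapsto (x^-)^2$; because $b_i$ and $\sigma_i$ are controlled by $C x_i$ and vanish where $x_i \le 0$ (after the projection extension, the extended coefficients at a point with $i$-th coordinate $\le 0$ equal their values at the projection, where the relevant coordinate is $0$, hence they vanish), the drift and quadratic-variation terms in $d\phi_\varepsilon(X_t^i)$ are both dominated by a constant times $\phi_\varepsilon(X_t^i)$ plus an error $o(1)$ as $\varepsilon \to 0$. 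Taking expectations, letting $\varepsilon \to 0$, and applying Gronwall's lemma (stated above) yields $\EE[(X_t^i)^{-2}] = 0$ for all $t$, hence $X_t^i \ge 0$ a.s. for all $i$ and all $t$, so $X_t \in \mathcal Q$ for all $t$ (first for all rational $t$, then for all $t$ by continuity of paths).

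The main obstacle I anticipate is the boundary behaviour, i.e.\ making the non-negativity argument rigorous despite the fact that $x \mapsto (x^-)^2$ is not $C^2$ and that, a priori, before we know $X$ stays in $\mathcal Q$, the coefficients off $\mathcal Q$ are only whatever the extension makes them. The extension-by-projection device is what makes this work: it guarantees that wherever the $i$-th coordinate is $\le 0$ the extended $\tilde b_i, \tilde\sigma_i$ genuinely vanish (since the projection of such a point onto $\mathcal Q$ has $i$-th coordinate $0$, where the original bound forces $b_i = \sigma_i = 0$), so the local-martingale term in $d\phi_\varepsilon(X^i_t)$ has integrand supported on $\{X^i_t > 0\}$ where $\phi_\varepsilon' \to 0$, and the drift term is handled by the same support consideration together with the linear-growth control. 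A secondary, purely technical point is checking that the projection-composed coefficients are still globally Lipschitz (composition of Lipschitz maps) and that the cylindrical-noise SDE theory applies with $G$ possibly infinite-dimensional, which is immediate since only $\|\tilde\sigma(x)\|_{\HS(G,\RR^n)}^2 = \sum_i \|\tilde\sigma_i(x)\|_G^2$ enters the estimates. Uniqueness in $\mathcal Q$ then follows a fortiori from uniqueness in $\RR^n$.
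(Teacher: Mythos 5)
Your overall architecture (extend the coefficients by composing with the $1$-Lipschitz projection $\Pi$ onto $\mathcal Q$, solve the extended Lipschitz SDE on $\RR^n$, check $\sum_i X_t^i\equiv 1$ by summing the equations, then prove non-negativity and conclude that the extended solution solves the original equation) is exactly the paper's. Where you diverge is the non-negativity step, and there your argument rests on a claim that is false as stated: it is \emph{not} true that the projection onto $\mathcal Q$ of an arbitrary point with $x_i\le 0$ has $i$-th coordinate $0$. For $n=2$ and $x=(-0.1,\,0.5)$ one computes $\Pi(x)=(0.2,\,0.8)$, so $\tilde b_1(x)=b_1(0.2,0.8)$ and $\tilde\sigma_1(x)=\sigma_1(0.2,0.8)$ need not vanish even though $x_1<0$. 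The claim you need is the weaker one restricted to the hyperplane $\{\sum_j x_j=1\}$: writing the simplex projection as $\Pi(x)_i=\max(x_i-\tau,0)$ with $\tau$ chosen so the coordinates sum to $1$, one checks that if $\sum_j x_j=1$ and some coordinate is negative then $\tau>0$, whence $\Pi(x)_i=0$ for every $i$ with $x_i\le 0$. Since you have already shown that the extended solution never leaves this hyperplane, this restricted statement suffices for your It\^o/Gronwall argument on a smoothing of $(x^-)^2$; but you must state and prove it, and make explicit that the vanishing of the coefficients is only being invoked along the paths of $X$, which live on the hyperplane. As written, the parenthetical justification is wrong and the proof has a hole exactly at its crucial step.

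For comparison, the paper avoids this delicacy entirely. On the event that all coordinates of $\xi$ are strictly positive, it stops the process at $T=\inf\{t: \min_i X^i_t=0\}$ (before which $X_t\in\mathcal Q$, so $\Pi$ is the identity and no extension issues arise), writes $dX^i = X^i\,dZ^i$ with $Z^i$ a semimartingale with \emph{bounded} characteristics $c_i=b_i/x_i$, $\tau_i=\sigma_i/x_i$ (bounded precisely by the hypothesis $|b_i|+\|\sigma_i\|\le Cx_i$), and concludes $X^i_{t\wedge T}=\xi_i e^{Z^i_t-[Z^i]_t/2}>0$, forcing $T=\infty$. Initial conditions with some $\xi_i=0$ are handled by noting that the corresponding coefficients vanish identically on the face $\{x_i=0\}$ and reducing to a lower-dimensional simplex. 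Your $(x^-)^2$ argument, once repaired as above, has the modest advantage of treating interior and boundary initial data uniformly, but it buys this at the cost of the convex-analytic fact about $\Pi$ that you omitted; the paper's stochastic-exponential representation is the cleaner route because it never needs to evaluate the extended coefficients off $\mathcal Q$.
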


\begin{remark} We are interested in the following
situation.
  Fix a collection of real numbers
$r_1, \ldots, r_n$ and let
$$
b_i(x)  = x_i \left( \sum_{j=1}^n r_j x_j - r_i \right)
$$
for $x=(x_1, \ldots, x_n) \in \mathcal{Q}$.
\end{remark}

\begin{proof}
Uniqueness follows from the Lipschitz assumption in the usual way.  We need only prove existence.

Let $\Pi$ be the projection onto the closed convex set $\mathcal Q$.
Note that $\Pi$ is Lipschitz, and hence the functions $b \circ \Pi$ and $\sigma \circ \Pi$
are also Lipschitz.   Given $\xi \in \mathcal Q$, let
$(X_t)_{t \ge 0}$ be the unique strong solution to the SDE
$$
dX_t = b \circ \Pi(X_t) dt + \sigma\circ \Pi(X_t) dW_t.
$$
which exists by It\^o's theorem.   Note that by summing over the indices,  we have
$$
d\sum_{i=1}^n X^i_t = 0 \Rightarrow \sum_{i=1}^n X^i_t = 1 \mbox{ for all } t \ge 0.
$$
We need only show that $X_t^i \ge 0$  for all $t \ge 0$ and all $i$.  We would be
done since $\Pi(x)=x$ when $x \in \mathcal Q$.

First we suppose that $\xi_i > 0$ for all $1 \le i \le n$.
Let $T = \inf\{ t \ge 0: \min \hat X^i_t = 0 \}$.  We will now show that $T= \infty$ almost surely.
For each $1 \le i \le n$, we define bounded functions $c_i$ and $\tau_i$ by the formula
$$
c_i (x) =  \left\{ \begin{array}{ll} \frac{b_i(x)}{x_i} & \mbox{ if } x_i > 0 \\ 0 & \mbox{ otherwise } \end{array} \right.
$$
and
$$
\tau_i (x) =  \left\{ \begin{array}{ll} \frac{ \sigma_i(x)}{x_i} & \mbox{ if } x_i > 0 \\ 0 & \mbox{ otherwise } \end{array} \right.
$$
where $x=(x_1, \ldots, x_n) \in \mathcal{P}$.  In particular
$$
X_{t \wedge T}^i = \xi_i + \int_0^t X^i_{s \wedge T} dZ_t^i
$$
where $Z^i$ is the continuous semimartingale
$$
Z_t^i = \int_0^t \one_{\{ s \le T\}} [ c_i(X_s) ds + \tau_i(X_s) dW_s]
$$
Hence we can write
$$
X_{t \wedge T}^i = \xi_i e^{ Z^i_t - [Z^i]_t/2}.
$$
Since the right-hand side is strictly positive almost surely for all finite $t$, the
event $\{T < \infty\}$ must have probability zero, as claimed.

Now consider the case where there is at least one $i$ such that $\xi_i = 0$.  By relabelling
if necessary, we may write $\xi = (\hat \xi, 0)$ where $\hat \xi \in \RR^m$ for some $m < n$
and $0 \in \RR^{n-m}$.  In fact, we have $\hat \xi \in \hat{ \mathcal Q}$ where
$$
\hat{ \mathcal Q} = \left\{ \hat x \in \RR^m:  \hat x_i > 0 \mbox{ for all } i, \ \sum_{i=1}^m  \hat x_i = 1\right\}.
$$
Note that by assumption that for all $\hat x \in \hat{\mathcal Q}$ we have
$$
b_i(\hat x, 0) = 0 \mbox{ and } \sigma_i( \hat x, 0) = 0 \mbox{ for all } m+1 \le i \le n
$$
 and hence
$$
\sum_{i=1}^m b_i(\hat x, 0) = 0 \mbox{ and } \sum_{i=1}^m \sigma_i( \hat x, 0) = 0.
$$
  By the above argument there
exists a process $(\hat X)_{t \ge 0}$ taking values in $\hat{\mathcal Q}$ such that
$$
d \hat X_t = \hat b(\hat X_t) dt + \hat \sigma(\hat X_t) dW_t
$$
where the functions $\hat b$ and $\hat \sigma$ are defined by $\hat b_i( \hat x) = b_i(\hat x, 0)$
 $\hat \sigma_i( \hat x) = \sigma_i(\hat x, 0)$ for $\hat x \in \hat{\mathcal Q}$ and $1 \le i \le m$.
In particular the process $X = (\hat X, 0)$ solves the original SDE and takes values in $\mathcal Q$
as desired.
\end{proof}

For what follows,  fix points $r_1, \ldots, r_N \in I$.

\begin{lemma}\label{th:density}
For all   $x_1 \ldots x_N \ge 0$ such that $\sum_j x_j =  1$
there exist functions  $\sigma_{i }: \RR^N \to G$ such that
$$
\sigma  \left( \sum_i x_i \delta_{r_i} \right) =   \sum_i \sigma_{i}(x_1, \ldots, x_N) \delta_{r_i}.
$$
\end{lemma}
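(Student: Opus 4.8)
The plan is to read the decomposition directly off the \emph{absolute continuity} clause of Assumption~\ref{as:1}. That clause provides a function $g:\mathcal P\times I\to G$ with $\|g(\mu,r)\|_G\le C$ for all $\mu,r$, such that $\sigma(\mu)(dr)=g(\mu,r)\,\mu(dr)$, where we use the standing identification of the operator $\sigma(\mu)\in\HS(G,H^*)$ with the corresponding $G$-valued measure in $H^*\otimes G$. Nothing beyond this hypothesis is really needed for the bare existence assertion.

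Concretely, fix $x=(x_1,\dots,x_N)$ with $x_i\ge 0$ and $\sum_i x_i=1$, and write $\mu_x:=\sum_{i=1}^N x_i\delta_{r_i}$. Substituting $\mu_x(dr)=\sum_i x_i\,\delta_{r_i}(dr)$ into the absolute continuity identity gives
$$
\sigma(\mu_x)(dr)=g(\mu_x,r)\sum_{i=1}^N x_i\,\delta_{r_i}(dr)=\sum_{i=1}^N x_i\,g(\mu_x,r_i)\,\delta_{r_i}(dr),
$$
since integration against $\delta_{r_i}$ only sees the value of $g(\mu_x,\cdot)$ at $r_i$; when $x_i=0$ the $i$-th term vanishes regardless of that value, so the formula does not depend on the choice of $\mu_x$-a.e.\ representative of $g$. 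Hence it suffices to put $\sigma_i(x_1,\dots,x_N):=x_i\,g(\mu_x,r_i)$ for $x$ in the simplex, and to extend each $\sigma_i$ to all of $\RR^N$ in any convenient manner, for instance by precomposing with the $1$-Lipschitz metric projection $\Pi$ onto the closed convex set $\mathcal Q$. This is precisely the claimed identity.

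Although the lemma asks only for the existence of such functions, I would also record the structural properties they inherit, since these are what feed into Proposition~\ref{th:atomprop} in the atomic existence argument: the bound $\|\sigma_i(x)\|_G\le C x_i$ is immediate from $\|g\|_G\le C$; the centering relation $\sum_i\sigma_i(x)=\sigma(\mu_x)^*\mathbf 1=0$ follows from the \emph{centering} clause of Assumption~\ref{as:1}; and each $\sigma_i$ is Lipschitz on $\mathcal Q$. The last point is the only place any genuine (if small) work occurs: one uses that distinct Dirac measures $\delta_{r_1},\dots,\delta_{r_N}$ are linearly independent in $H^*$, so the Gram matrix $\big(\langle\delta_{r_i},\delta_{r_j}\rangle_{H^*}\big)_{i,j}$ is positive definite, and then the identity
$$
\|\sigma(\mu_x)-\sigma(\mu_y)\|_{\HS}^2=\sum_{i,j}\big\langle \sigma_i(x)-\sigma_i(y),\,\sigma_j(x)-\sigma_j(y)\big\rangle_G\,\langle\delta_{r_i},\delta_{r_j}\rangle_{H^*}
$$
together with the \emph{Lipschitz} clause $\|\sigma(\mu_x)-\sigma(\mu_y)\|_{\HS}\le C\|\mu_x-\mu_y\|_{H^*}$ and $\|\mu_x-\mu_y\|_{H^*}\le(\max_i\|\delta_{r_i}\|_{H^*})\sum_i|x_i-y_i|$ bounds $\sum_i\|\sigma_i(x)-\sigma_i(y)\|_G^2$ by a fixed multiple of $|x-y|^2$. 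Composition with $\Pi$ preserves all three properties. In short, the decomposition itself is a one-line substitution once absolute continuity is invoked, and the only obstacle worth naming — the positive-definiteness of the Dirac Gram matrix needed to propagate the Lipschitz estimate from the Hilbert--Schmidt norm back to the coefficients — is entirely routine.
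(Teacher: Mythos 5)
Your decomposition is exactly the paper's: the lemma is a one-line consequence of the absolute continuity clause of Assumption~\ref{as:1}, setting $\sigma_i(x)=x_i\,g(\mu_x,r_i)$ for the atomic measure $\mu_x=\sum_i x_i\delta_{r_i}$, which is precisely what the paper does. The additional observations you record (the bound $\|\sigma_i(x)\|_G\le Cx_i$, centering, and the Lipschitz property of the $\sigma_i$) go beyond the statement of the lemma and correspond to the paper's separate Proposition~\ref{th:Lip} and the remarks following it, so they are correct but not needed here.
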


\begin{proof}
By assumption, since $r_1, \ldots, r_N$ are fixed, there are function $g_i$ such that
$$
\sigma  \left( \sum_i x_i \delta_{r_i} \right) =   \sum_i g_i(x_1, \ldots, x_N) x_i \delta_{r_i}.
$$
Let $\sigma_i(x) = g_i(x) x_i$.
\end{proof}

\begin{proposition}\label{th:Lip}
Let $\sigma: \mathcal P \to  \HS(G,H^*)$ be Lipschitz, so that there
  exists a $K  > 0$  such that
$$
\| \sigma(\mu) - \sigma(\nu) \|_{\HS} \le K  \| \mu - \nu \|_{H^*}.
$$
for all
$\mu, \nu \in \mathcal P$.   Define $\sigma_i: \RR^N \to G$ as in Lemma \ref{th:density}.
Then the functions $\sigma_i$ are Lipschitz.
\end{proposition}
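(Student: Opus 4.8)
\medskip
\noindent\emph{Proposed proof.}
The plan is to exploit the Hilbert--Schmidt bound on $\sigma$ together with a duality trick, after reducing to the case that the points $r_1,\dots,r_N$ are distinct (if two coincide the decomposition in Lemma~\ref{th:density} is not unique, so this is the only case of interest). Since $H$ contains every continuous piecewise-linear function on $I$, we may fix, for each $i$, a function $\psi_i\in H$ with $\psi_i(r_j)=\delta_{ij}$ for all $j$, so that $\|\psi_i\|_H<\infty$. Throughout, write $\mu=\sum_j x_j\delta_{r_j}$ and $\nu=\sum_j y_j\delta_{r_j}$ for $x,y$ in the simplex $\mathcal Q$, set $A:=\sigma(\mu)-\sigma(\nu)\in\HS(G,H^*)$, and let $|\cdot|$ denote the Euclidean norm on $\RR^N$.

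First I would record two elementary estimates. Because $H$ embeds continuously into $C(I)$ --- this is the pointwise bound $\varphi(r)^2\le 2(1+a+b)\|\varphi\|_H^2$ already used in the preceding lemma --- the Dirac masses satisfy $c_I:=\sup_{r\in I}\|\delta_r\|_{H^*}<\infty$, and hence
\[
\|\mu-\nu\|_{H^*}\le c_I\sum_j|x_j-y_j|\le c_I\sqrt N\,|x-y|.
\]
Next, by Lemma~\ref{th:density} the operator $A$ acts by $Ag=\sum_j\langle\sigma_j(x)-\sigma_j(y),g\rangle_G\,\delta_{r_j}$ for $g\in G$, so a direct computation of the pairing $\langle Ag,\varphi\rangle_{H^*,H}$ shows that its adjoint $A^*:H\to G$ acts by $A^*\varphi=\sum_j\varphi(r_j)\,(\sigma_j(x)-\sigma_j(y))$ for $\varphi\in H$.

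The key step is to feed the dual functions into the adjoint: evaluating $A^*$ at $\psi_i$ collapses the sum to a single term, giving $\sigma_i(x)-\sigma_i(y)=A^*\psi_i$, whence, using $\|A^*\|_{\mathrm{op}}\le\|A^*\|_{\HS}=\|A\|_{\HS}$ together with the Lipschitz hypothesis on $\sigma$,
\[
\|\sigma_i(x)-\sigma_i(y)\|_G\le\|\psi_i\|_H\,\|A\|_{\HS}\le K\,\|\psi_i\|_H\,\|\mu-\nu\|_{H^*}\le K\,c_I\sqrt N\,\|\psi_i\|_H\,|x-y|.
\]
Thus each $\sigma_i$ is Lipschitz on $\mathcal Q$; precomposing with the $1$-Lipschitz projection $\Pi$ of $\RR^N$ onto $\mathcal Q$ introduced earlier extends it to a Lipschitz map on all of $\RR^N$ with the same constant, which finishes the proof.

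The only real obstacle is this extraction of the individual $G$-valued coordinates $\sigma_i$ from the Hilbert--Schmidt bound on the whole operator $\sigma$. The tempting route via the absolute-continuity representation $\sigma_i(x)=g_i(x)x_i$ and the product rule $\sigma_i(x)-\sigma_i(y)=g_i(x)(x_i-y_i)+(g_i(x)-g_i(y))y_i$ breaks down on the second term, since $g$ is assumed only bounded, not Lipschitz; the duality argument above sidesteps $g$ entirely and uses only the Lipschitz hypothesis on $\sigma$ and the continuous embedding $H\hookrightarrow C(I)$. It is precisely here that distinctness of the $r_i$ is needed, in order to produce the interpolating functions $\psi_i\in H$.
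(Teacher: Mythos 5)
Your proof is correct and is essentially the paper's own argument: the paper packages the key step as Lemma~\ref{th:Lip-finite} (the norm equivalence $c\sum_i\|z_i\|_G\le\|\sum_i z_i\delta_{r_i}\|_{\HS}\le C\sum_i\|z_i\|_G$), whose lower bound is obtained by pairing against exactly your tent functions $\psi_i$ vanishing at the other atoms, and whose upper bound is your $c_I=\max_i\|\delta_{r_i}\|_{H^*}$ estimate. Your phrasing via the adjoint $A^*\psi_i=\sigma_i(x)-\sigma_i(y)$ is just a cleaner way of writing the same duality computation.
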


We need a lemma, which amounts to the well-known  fact that
norms on $\RR^N$ are Lipschitz equivalent:

\begin{lemma}\label{th:Lip-finite}
For any $z_1, \ldots, z_N \in G$, there exists constants
$0 < c < C$ such that
$$
c \sum_i \|z_i \|_G \le \left\| \sum_i z_i \delta_{r_i} \right\|_{\HS} \le C \sum_i \|z_i\|_G.
$$
\end{lemma}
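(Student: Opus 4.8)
The plan is to reduce the claim to the elementary fact that all norms on the finite-dimensional space $\RR^N$ are equivalent. The bridge is the observation that $\|\cdot\|_{H^*}^2$, restricted to signed measures supported in the fixed finite set $\{r_1,\dots,r_N\}$, is a quadratic form: for $v=(v_1,\dots,v_N)\in\RR^N$ put $Q(v):=\big\|\sum_i v_i\delta_{r_i}\big\|_{H^*}^2$. By the explicit formula for the $H^*$-norm of a signed measure recorded above, $Q$ is a non-negative quadratic form in $v$ (a finite sum of squares of linear functionals of $v$). The constants $c,C$ that the argument produces will depend only on $N$ and the points $r_1,\dots,r_N$.

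First I would unfold the Hilbert--Schmidt norm over a complete orthonormal basis $(e_n)_n$ of $G$. Identifying $\sum_i z_i\delta_{r_i}$ with the operator $A\colon g\mapsto\sum_i\langle z_i,g\rangle_G\,\delta_{r_i}$, one has $Ae_n=\sum_i\langle z_i,e_n\rangle_G\,\delta_{r_i}$, so that
$$
\|A\|_{\HS}^2=\sum_n\Big\|\sum_i\langle z_i,e_n\rangle_G\,\delta_{r_i}\Big\|_{H^*}^2=\sum_n Q\big(v^{(n)}\big),\qquad v^{(n)}:=\big(\langle z_1,e_n\rangle_G,\dots,\langle z_N,e_n\rangle_G\big).
$$
The key step is then to show that $Q$ is \emph{positive definite} — equivalently, that the Dirac masses $\delta_{r_1},\dots,\delta_{r_N}$ are linearly independent in $H^*$. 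This is where the distinctness of the $r_i$ enters: the embedding of finite signed measures into $H^*$ is injective (if $\|\mu\|_{H^*}=0$ then $\int\varphi\,d\mu=0$ for all $\varphi\in H$, and $H$ contains a sup-norm dense subset of $C(I)$, so $\mu=0$), and a non-trivial combination $\sum_i v_i\delta_{r_i}$ is a non-zero measure because the $r_i$ are distinct; alternatively one reads this off directly from the explicit formula for $Q$. Positive definiteness furnishes constants $0<\lambda\le\Lambda$, depending only on $N$ and $r_1,\dots,r_N$, with $\lambda|v|^2\le Q(v)\le\Lambda|v|^2$ for all $v\in\RR^N$, where $|\cdot|$ is the Euclidean norm.

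Finally I would assemble the pieces. Substituting these bounds into $\|A\|_{\HS}^2=\sum_n Q(v^{(n)})$, interchanging the order of summation, and using Parseval's identity $\sum_n\sum_i\langle z_i,e_n\rangle_G^2=\sum_i\|z_i\|_G^2$ gives
$$
\lambda\sum_i\|z_i\|_G^2\ \le\ \|A\|_{\HS}^2\ \le\ \Lambda\sum_i\|z_i\|_G^2 .
$$
Combining this with the equivalence $\tfrac1{\sqrt N}\sum_i\|z_i\|_G\le\big(\sum_i\|z_i\|_G^2\big)^{1/2}\le\sum_i\|z_i\|_G$ of the $\ell^1$ and $\ell^2$ norms on $\RR^N$ yields the stated two-sided inequality, with, say, $c=\sqrt{\lambda/N}$ and $C=\sqrt\Lambda$ (shrinking $c$ or enlarging $C$ slightly if one insists on $c<C$). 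The main obstacle — really the only non-bookkeeping step — is the positive definiteness of $Q$, i.e.\ the fact that distinct Dirac masses remain linearly independent inside the completed dual space $H^*$; everything else is routine manipulation of sums and norms.
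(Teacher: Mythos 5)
Your proof is correct, and it organizes the argument differently from the paper. The paper gets the upper bound by the triangle inequality with $C=\max_i\|\delta_{r_i}\|_{H^*}$, and gets the lower bound by testing the operator against ``tent'' functions $\phi_j\in H$ with $\phi_j(r_j)>0$ and $\phi_j(r_i)=0$ for $i\neq j$, using $\bigl\|\sum_i z_i\delta_{r_i}\bigr\|_{\HS}\ge \|\phi_j\|_H^{-1}\bigl\|\sum_i \phi_j(r_i)z_i\bigr\|_G=\|\phi_j\|_H^{-1}|\phi_j(r_j)|\,\|z_j\|_G$ and then summing over $j$; this yields fully explicit constants. You instead separate the $G$-variable from the $\RR^N$-variable: an orthonormal expansion plus Parseval reduces everything to the scalar quadratic form $Q(v)=\bigl\|\sum_i v_i\delta_{r_i}\bigr\|_{H^*}^2$, i.e.\ to the Gram matrix of the Dirac masses in $H^*$, and the two-sided bound becomes the statement that this Gram matrix is positive definite, with $\lambda,\Lambda$ its extreme eigenvalues. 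The one substantive input you still need --- linear independence of $\delta_{r_1},\dots,\delta_{r_N}$ in $H^*$ for \emph{distinct} $r_i$ --- is exactly what the paper's tent functions establish, so the two proofs rest on the same underlying fact; your version isolates where distinctness of the $r_i$ enters (the lemma as stated tacitly assumes it, and indeed fails otherwise) and cleanly identifies the optimal constants, while the paper's version is shorter and avoids invoking the Hilbert-space (rather than merely Banach-dual) structure of $H^*$ beyond the definition of $\|\cdot\|_{\HS}$.
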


\begin{proof}[Proof of Lemma \ref{th:Lip-finite}]
First note by the triangle inequality
\begin{align*}
\left\| \sum_i z_i \delta_{r_i} \right\|_{\HS} & \le  \sum_i \|z_i\|_G \| \delta_{ r_i } \|_{H^*} \\
& \le  C  \sum_i \|z_i\|_G
\end{align*}
where $C = \max_i  \| \delta_{ r_i } \|_{H^*} $.

Now by the inequality
$$
\left | \int \phi(r) \mu(dr) \right| \le \| \mu\|_{H^*} \| \phi \|_{H}
$$
we have
\begin{align*}
\left\| \sum_i z_i \delta_{r_i} \right\|_{\HS}
& \ge \frac{1}{ \| \phi \|_{H} }   \left| \sum_i z_i \phi(r_i) \right|_G
\end{align*}
Let  $\phi_i \in H$ be such $\phi_i(r_i) > 0$ but $\phi_i(r_j) = 0$ for $j \ne i$.
For instance, suppose $r_1 < \ldots < r_N$ and let the graph of $\phi_i$
be a little triangle with base between $r_{i-1}$ and $r_{i+1}$ and vertex at $r_i$
for $1 < i < N$, and the construction appropriately modified for $i=1, N$.  By the above
inequality we have
\begin{align*}
\left\| \sum_i z_i \delta_{r_i} \right\|_{\HS} & \ge \frac{ |\phi_j(r_j) |}{ \| \phi_j\|_{H} }  \|z_j\|_G \mbox{ for all }j \\
& \ge c \sum_i \|z_i\|
\end{align*}
where
$$
c = \frac{1}{N} \min_j \frac{ |\phi_j(r_j) |}{ \| \phi_j\|_{H} }.
$$
\end{proof}

\begin{proof}[Proof of Proposition \ref{th:Lip}]
Let
$$
\mu = \sum_j x_j \delta_{r_j}  \mbox{ and } \nu =  \sum_j y_j \delta_{r_j}.
$$
We have by Lemma \ref{th:Lip-finite} that
\begin{align*}
\sum_i \|\sigma_i(x)-\sigma_i(y)\|_{G} & \le \frac{1}{c} \left\| \sum_i (\sigma_i(x)-\sigma_i(y) )  \delta_{r_i} \right\|_{\HS}  \\
& = \frac{1}{c}  \| \sigma (\mu) - \sigma (\nu) \|_{\HS}    \\
& \le \frac{1}{c} K  \left\| \sum_j (x_j-y_j)  \delta_{r_j} \right\|_{H^*}  \\
 & \le \frac{C}{c} K  \sum_j |x_j-y_j|.
\end{align*}
\end{proof}

Now by Assumption \ref{as:1},
\begin{align*}
&\text{Centering implies} \sum_{i=1}^n \sigma_i(x) = 0 \\
&\text{Lipschitz of } \sigma \text{ implies Lipschitz of } \sigma_i \text{ by Proposition \ref{th:Lip}} \\
&\text{Boundedness in absolute continuity implies } \frac{\|\sigma_i(x)\|}{x_i} \leq C
\end{align*}

And for any $\mu = \sum_{j=1}^n x_j \delta_{r_j}$, we have
$$
(R(\mu) - \rho^* )\mu = x_i \left( \sum_{j=1}^n r_j x_j -r_i \right)
$$
Therefore applying Proposition \ref{th:atomprop}, we have a unique adapted process $(X_t)_{t \geq 0} = (X_t^1,\ldots,X_t^n)_{t \geq 0}$ in $\mathcal{Q}$ and define $\mu_t = \sum_{i=1}^n X_t^i \delta_{r_i}$, we have
\begin{lemma}
$\mu_t$ is a solution to SDE \eqref{eq:evol1} with initial condition $\mu_0 = \sum_{i=1}^n X_0^i \delta_{r_i}$
\end{lemma}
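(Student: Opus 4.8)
The plan is to exploit the fact that the ``embedding'' map $\iota : \RR^n \to H^*$ defined by $\iota(x) = \sum_{i=1}^n x_i \delta_{r_i}$ is a bounded linear operator (its operator norm with respect to, say, the $\ell^1$ norm on $\RR^n$ is at most $\max_i \| \delta_{r_i} \|_{H^*}$), and that bounded linear operators commute with both Lebesgue (Bochner) integration and Hilbert-space stochastic integration against a cylindrical Brownian motion. Starting from $dX_t = b(X_t)\,dt + \sigma(X_t)\,dW_t$ with $X_0 = \xi \in \mathcal Q$, where $b_i(x) = x_i\big(\sum_j r_j x_j - r_i\big)$ and $\sigma = (\sigma_1, \dots, \sigma_n)$ with the $\sigma_i$ produced by Lemma \ref{th:density}, applying $\iota$ to both sides would give, for every $t \ge 0$,
$$
\mu_t = \iota(X_t) = \iota(\xi) + \int_0^t \iota\big(b(X_s)\big)\,ds + \int_0^t (\iota\circ\sigma)(X_s)\,dW_s ,
$$
where the last integral is now an $H^*$-valued stochastic integral, which is well-defined and a continuous local martingale by the Hilbert-space integration result recalled in Section 1.

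Next I would identify the two integrands with the coefficients of \eqref{eq:evol1} evaluated along $\mu$. For the drift, the computation recorded immediately before this lemma gives $\iota(b(X_s)) = \sum_i X_s^i\big(\sum_j r_j X_s^j - r_i\big)\delta_{r_i} = (R(\mu_s) - \rho^*)\mu_s$. For the diffusion, the operator $(\iota\circ\sigma)(x) \in \HS(G, H^*)$ acts on $g \in G$ by $g \mapsto \sum_i \langle \sigma_i(x), g\rangle_G \, \delta_{r_i}$, which under the identification of $\HS(G,H^*)$ with $H^* \otimes G$ is precisely $\sum_i \sigma_i(x)\,\delta_{r_i}$; by Lemma \ref{th:density} this equals $\sigma\big(\sum_i x_i \delta_{r_i}\big)$, so $(\iota\circ\sigma)(X_s) = \sigma(\mu_s)$ (note $\mu_s = \iota(X_s) \in \mathcal P$ because $X_s \in \mathcal Q$, so the right-hand side is defined). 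Substituting yields $\mu_t = \mu_0 + \int_0^t (R(\mu_s)-\rho^*)\mu_s\,ds + \int_0^t \sigma(\mu_s)\,dW_s$ with $\mu_0 = \iota(\xi) = \sum_i X_0^i \delta_{r_i}$, which is exactly the assertion. Alongside this I would record the routine facts that $\mu$ is a continuous adapted $H^*$-valued process (being the image under the continuous map $\iota$ of the continuous adapted $\RR^n$-valued process $X$), that $\mu_t \in \mathcal P$ for all $t$, and that $\int_0^t \| \sigma(\mu_s) \|_{\HS}^2\,ds < \infty$ a.s.\ because $\| \sigma_i(x) \|_G \le C x_i \le C$, so that the $H^*$-valued integral is genuinely admissible.

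The only genuinely delicate step is the claim that $\iota$ can be pulled through the stochastic integral, i.e.\ that the $\RR^n$-valued It\^o integral and the $H^*$-valued integral are related by $\iota\big(\int_0^t \sigma(X_s)\,dW_s\big) = \int_0^t (\iota\circ\sigma)(X_s)\,dW_s$. This is standard: it holds for simple (elementary) integrands essentially by the linearity built into the definition of the integral, and it extends to the general case by the isometry, using $\| \iota\circ A \|_{\HS} \le \| \iota \| \, \| A \|_{\HS}$ together with localisation; but it deserves to be stated explicitly since it is the one place where the finite-dimensional SDE of Proposition \ref{th:atomprop} must be reconciled with the infinite-dimensional formalism of Section 1. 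Everything else is bookkeeping with bounded linear maps, so I expect the write-up to be short.
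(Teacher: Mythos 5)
Your proposal is correct, but it takes a genuinely different route from the paper. The paper proves the lemma in weak form: it fixes an arbitrary test function $\varphi \in H$, writes out $\langle \varphi, \mu_t\rangle$, $\int_0^t \langle \varphi, (R(\mu_s)-\rho^*)\mu_s\rangle\, ds$ and $\int_0^t \langle \varphi, \sigma(\mu_s)\,dW_s\rangle$ as explicit finite sums $\sum_i(\cdots)\varphi(r_i)$, and observes that the resulting scalar identity is exactly the $i$-th component of the finite-dimensional SDE from Proposition \ref{th:atomprop} summed against the weights $\varphi(r_i)$. You instead push the finite-dimensional equation forward through the bounded linear embedding $\iota(x)=\sum_i x_i\delta_{r_i}$ and invoke the standard fact that bounded operators commute with Bochner and with Hilbert-space stochastic integrals. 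Both arguments are sound and rest on the same two identifications (the drift identity recorded just before the lemma, and $\sigma(\sum_i x_i\delta_{r_i})=\sum_i\sigma_i(x)\delta_{r_i}$ from Lemma \ref{th:density}); the difference is where the functional-analytic work is placed. Your version delivers the conclusion directly as an identity in $H^*$, at the cost of citing the operator-commutation lemma for stochastic integrals (which you rightly flag as the one non-bookkeeping step, and which is indeed standard, e.g.\ in Da Prato--Zabczyk). The paper's version avoids that citation entirely but, strictly speaking, only yields the tested identity for each fixed $\varphi$ outside a $\varphi$-dependent null set; upgrading to the $H^*$-valued statement then needs separability of $H$ plus continuity of both sides, a point the paper leaves implicit and which your approach sidesteps. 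Either write-up is acceptable.
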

\begin{proof}
Take an arbitrary test function $\phi \in H$, we are going to check that
$$
<\phi, \mu_t> = <\phi, \mu_0> + \int_0^t  <\phi, (R(\mu_s) - \rho^* )\mu_s> ds + \int_0^t <\phi, \sigma(\mu_s)dW_s>
$$
The LHS is clearly $\sum_{i=1}^n X_t^i \phi(r_i)$. For the RHS, first notice that $R(\mu_s) = <r,\mu_s> = \sum_{j=1}^n X_s^j r_s$ and the second term is then
$$
\int_0^t  <\phi, (R(\mu_s) - \rho^* )\mu_s> ds = \sum_{i=1}^n \int_0^t   (\sum_{j=1}^n X_s^j r_s - r_i) X_s^i \phi(r_i)ds
$$
The last term is
\begin{align*}
\int_0^t <\phi, \sigma(\mu_s)dW_s> &= \int_0^t <\phi,\sum_{i=1}^n \sigma_i(X_s)  dW_s \delta_{r_i}> \\
                                   &= \sum_{i=1}^n \int_0^t \sigma_i(X_s)dW_s \phi(r_i)
\end{align*}
Therefore the RHS is given by
$$
\sum_{i=1}^n X_0^i \phi(r_i) + \sum_{i=1}^n \int_0^t   (\sum_{j=1}^n X_s^j r_s - r_i) X_s^i ds \phi(r_i)+ \sum_{i=1}^n \int_0^t \sigma_i(X_s)dW_s \phi(r_i)
$$
Clearly LHS = RHS because $(X_t)_{t \geq 0}$ is a solution in Proposition \ref{th:atomprop}.
\end{proof}
\vspace{3ex}
\noindent
\textit{Step 3: Convergence of atomic solutions}
\begin{lemma}\label{th:festi}
Let $(\mu^n)_{n \geq 0}$ be a sequence of probability measures on $I = [-a,b]$. Given a further $\mu \in H^{*}$ such that $\mu\{-a,b\} = 0$. then
$$
\mu^n \rightarrow \mu \text{ weakly if and only if } \mu^n \rightarrow \mu \text{ in } H^{*}
$$
In particular, $\mu$ is also a probability measure.
\end{lemma}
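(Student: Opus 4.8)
The plan is to prove the two implications of the equivalence separately, exploiting that (for a finite signed measure) the $H^*$-norm is given, as recalled above, by
$$
\|\mu\|_{H^*}^2 = |\mu(I)|^2 + \int_{-a}^0 \mu[-a,r)^2\,dr + \int_0^b \mu(r,b]^2\,dr ,
$$
so that convergence in $H^*$ of probability measures is nothing but $L^2$-convergence of their one-sided cumulative distribution functions together with convergence of total masses.

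For the implication ``$H^*$-convergence $\Rightarrow$ weak convergence'' I would argue as follows. Continuity of the duality pairing gives $\langle \mu^n,\varphi\rangle \to \langle\mu,\varphi\rangle$ for every $\varphi\in H$, in particular for every Lipschitz $\varphi$ on the compact interval $I$, since such functions lie in $H$. Because the $\mu^n$ are probability measures one has $|\langle\mu^n,\varphi\rangle|\le\|\varphi\|_\infty$, and Lipschitz functions are dense in $C(I)$; a routine $3\varepsilon$-argument then shows that $\langle\mu^n,\varphi\rangle$ converges for every $\varphi\in C(I)$, to a limit $L(\varphi)$. The functional $L$ is linear, satisfies $|L(\varphi)|\le\|\varphi\|_\infty$ and $L(\varphi)\ge\inf_I\varphi$ (from $\int_I\varphi\,d\mu^n\ge\inf_I\varphi$), so by the Riesz representation theorem it is integration against a probability measure $\tilde\mu$ on $I$. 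Since $L$ and $\langle\mu,\cdot\rangle$ agree on the Lipschitz functions, which are dense in $H$, we get $\tilde\mu=\mu$ in $H^*$; hence $\mu$ is a probability measure and $\mu^n\to\mu$ weakly, which also settles the ``in particular'' clause.

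For the converse, ``weak convergence $\Rightarrow$ $H^*$-convergence'', note first that a weak limit of probability measures on the compact set $I$ is again a probability measure, so $\mu(I)=1=\mu^n(I)$ and the first term of $\|\mu^n-\mu\|_{H^*}^2$ vanishes. By the Portmanteau theorem the distribution functions $r\mapsto\mu^n[-a,r]$ converge to $r\mapsto\mu[-a,r]$ at every continuity point of the latter, i.e.\ off the at most countable --- hence Lebesgue-null --- set of atoms of $\mu$; the hypothesis $\mu\{-a\}=\mu\{b\}=0$ ensures moreover that the endpoints are not among these atoms, so the same convergence holds for $\mu^n[-a,r)$ and for $\mu^n(r,b]=1-\mu^n[-a,r]$ at a.e.\ $r\in I$. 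The integrands of the two remaining terms of $\|\mu^n-\mu\|_{H^*}^2$ therefore converge to $0$ a.e.\ and are bounded by $1$, so dominated convergence yields $\|\mu^n-\mu\|_{H^*}\to0$.

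I expect the main obstacle to lie in the first implication rather than the second: a priori $\mu$ is only an element of the completion $H^*$, which strictly contains the space of signed measures, so the real work is to manufacture an honest probability measure out of the weak-$*$ limit and to identify it with $\mu$ --- this is exactly where the uniform total mass of the $\mu^n$ and the density of Lipschitz functions in both $C(I)$ and $H$ are needed. The second implication is comparatively soft: its only content is upgrading the pointwise-at-continuity-points convergence of distribution functions supplied by weak convergence to the $L^2$-convergence encoded by the $H^*$-norm, which dominated convergence delivers once atoms are known to form a null set.
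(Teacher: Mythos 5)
Your proof is correct, and in one direction it is genuinely more careful than the paper's own argument. The implication ``weak convergence $\Rightarrow$ $H^*$-convergence'' is essentially identical to the paper's: both identify the $H^*$-norm of a signed measure with the $L^2$-norms of the one-sided distribution functions plus the total mass, invoke Portmanteau together with the hypothesis $\mu\{-a,b\}=0$ and the countability of the atoms of $\mu$ to get convergence of the distribution functions at Lebesgue-a.e.\ $r$, and finish by dominated convergence. The other implication is where you diverge. The paper tests against an arbitrary bounded continuous $\varphi$ and asserts that ``clearly $\varphi\in H$'', which is false: a generic continuous function on $I$ (the Cantor function, say) is not absolutely continuous with square-integrable derivative, so the paper's argument as written only gives convergence of $\langle \mu^n,\varphi\rangle$ for $\varphi\in H$ rather than weak convergence against all of $C(I)$; moreover the paper verifies only $\mu(I)=1$ and never addresses non-negativity of the limit. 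Your route --- restricting first to Lipschitz test functions, which genuinely lie in $H$; extending to all of $C(I)$ via the uniform bound $|\langle\mu^n,\varphi\rangle|\le\|\varphi\|_\infty$ and density of Lipschitz functions in $C(I)$; producing an honest probability measure $\tilde\mu$ by Riesz representation; and identifying $\tilde\mu$ with $\mu$ using density of Lipschitz functions in $H$ --- closes both gaps at once, yielding true weak convergence and positivity of the limit. The only ingredient you use beyond the paper's is the continuous embedding of $H$ into $C(I)$, needed so that $\tilde\mu$ defines an element of $H^*$ and so that agreement on a dense subset of $H$ suffices for the identification; this is standard for the Sobolev space $H^1$ of a bounded interval.
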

\begin{proof}
Now suppose $\mu_n \rightarrow \mu$ in $H^{*}$, given any bounded continuous test function $\phi$. Clearly $\phi \in H$ and
\begin{align*}
|<\phi, \mu^n> - <\phi, \mu>| &= |<\phi, \mu^n - \mu>| \\
                              &\leq \|\phi\| \|\mu^n - \mu\| \rightarrow 0
\end{align*}
Hence $\mu^n \rightarrow \mu$ weakly. In particular, taking $\phi(r) = 1$ gives $<1,\mu> = 1$ which proves that $\mu$ is indeed a probability measure.

\vspace{2ex}
For the other direction, assume that $\mu^n \rightarrow \mu$ weakly, then
$$
1 = \mu^n(I) = <1, \mu^n> \rightarrow <1,\mu> = \mu(I) = 1
$$
Fix any $r < 0$, by setting $A = [-a,r)$, we have $\partial A = \{-a,r\}$. Then for almost every $r$ with respect to Lebesgue measure
$$
\mu(\partial A) = \mu\{-a\} + \mu\{r\} = 0
$$
Since there are only countably many discontinuities in distribution function of $\mu$. Hence $\mu(A) - \mu^n(A) \rightarrow 0$ for almost every $r$ by weak convergence. Also $|\mu(A) - \mu^n(A)| \leq 2$ since there are probability measures. Then by dominated convergence theorem:
$$
\int_{-a}^0 \left( \mu(A) - \mu^n(A) \right)^2 dr \rightarrow 0
$$
Similarly,
$$
\int_0^b \left( \mu(r,b] - \mu^n(r,b] \right)^2 dr \rightarrow 0
$$
and hence $\|\mu - \mu^n \| \rightarrow 0$.
\end{proof}

Now we are ready to prove the main theorem \ref{th:solex}
\begin{proof}
Let $\mu_0 \in \mathcal{P}$ be any such probability measure on $I$. Let $(\mu_0^n)_{n \geq 0}$ be a sequence of atomic probability measures that converge weakly to $\mu$. This is always possible since the set of atomic measures is dense. By the previous lemma, we have
$$
\mu_0^n \rightarrow \mu_0 \quad \text{ in } H^{*}
$$
By step 2, we know that for each $n$, there exists a continuous solution $\bar{\mu}^n = (\mu^n_t)_{t \geq 0}$ to the SDE \eqref{eq:evol1} such that $\mu_t^n \in \mathcal{P}$ for each $n,t$. Then by theorem \ref{th:solesti}, for any $T > 0$:
$$
|\| \bar{\mu}^m - \bar{\mu}^n \||_T^2 \leq \| \mu^m_0 - \mu^n_0 \|^2 \exp(KT^2) \rightarrow 0 \text{ as $m,n\rightarrow \infty$}
$$
Hence the sequence $(\bar{\mu}^n)_{n \geq 0}$ is Cauchy in $\mathcal{S}_T$ and therefore tends to some limit $\bar{\mu} \in \mathcal{S}_T$.

In particular fix any $t \leq T$
$$
\EE( \|\mu_t^n - \mu_t \|^2) \leq |\| \mu^n - \mu \||^2 \rightarrow 0
$$
Since $\mu_t^n$ are probability measures, then $1 = \mu^n_t(I) = <1,\mu^n_t>$ and hence:
\begin{align*}
\EE\left( 1 - \mu_t(I) \right)^2 &=  \EE |<1,\mu_t^n> - <1, \mu_t>|^2 \\
                                 &\leq \EE \left( \|1\|_H^2 \|\mu^n_t - \mu_t \|_{H^{*}}^2 \right)  \\
                                 &\leq \EE \left( \|\mu^n_t - \mu_t \|_{H^{*}}^2 \right) \rightarrow 0
\end{align*}
Then $\EE\left( 1 - \mu_t(I) \right)^2 = 0$ implies $\mu_t(I) = 1$ almost surely. Then $\mu_t \in \mathcal{P}$ for all $t$.

Now since $\bar{\mu}^n = (\mu^n_t)_{t \geq 0}$ are solutions, i.e.
$$
\mu_t^n = \mu_0^n + \int_0^t F(\mu_s^n) ds + \int_0^t \sigma(\mu_s^n) dW_s
$$
Now we proceed similarly as in theorem \ref{th:solesti}.
\begin{align*}
|\| \int_0^{\cdot} \sigma(\mu_s^n) dW_s - \int_0^{\cdot} \sigma(\mu_s) dW_s \||^2_T &= \EE \left( \sup_{t \leq T} \| \int_0^t \sigma(\mu_s^n) - \sigma(\mu_s) dW_s \|^2 \right) \\
 \text{(Doob's $L^2$ inequality)} \quad &\leq 4 \EE \int_0^T \| \sigma(\mu_s^n) - \sigma(\mu_s)\|^2 ds \\
 \text{(Lipschitz of $\sigma$)} \quad   &\leq 4 CT |\| \bar{\mu}^n - \bar{\mu}\||_T^2 \rightarrow 0
\end{align*}
Therefore for any $T > 0$
$$
\left(\int_0^t \sigma(\mu_s^n) dW_s \right)_{t \leq T} \rightarrow \left( \int_0^t \sigma(\mu_s) dW_s \right)_{t \leq T} \text{ in } \mathcal{S}_T
$$
Similarly, since $\mu_s^n, \mu_s$ are probability measures for any $s,n$. We could apply lemma \ref{th:festi} and get
$$
\left(\int_0^t F(\mu_s^n) dW_s \right)_{t \leq T} \rightarrow \left( \int_0^t F(\mu_s) dW_s \right)_{t \leq T} \text{ in } \mathcal{S}_T
$$
take $n \rightarrow \infty$ both side we get:
$$
\mu_t = \mu_0 + \int_0^t F(\mu_s) ds + \int_0^t \sigma(\mu_s) dW_s
$$
i.e $\bar{\mu} = (\mu_t)_{t \leq T}$ is a solution. Since $T > 0$ was fixed arbitrarily, we get a solution $\bar{\mu} = (\mu_t)_{t \geq 0}$ to SDE \eqref{eq:evol1} subject to initial condition $\mu_0 \in \mathcal{P}$.
\end{proof}

The generic element of $H^*$ is a distribution but in principle may be much wilder than a signed measure.  However, we
recall this useful fact is Theorem 6.22 of the book of Lieb \& Loss \cite{LL}:
\begin{proposition}
Let
$$
H_+ = \{ \varphi \in H: \varphi(r) \ge 0 \mbox{ for all } r \in I \}.
$$
If $\mu \in H^*$ is such that
$$
\langle \mu, \phi \rangle_{H^*, H} \ge 0 \mbox{ for all } \varphi \in H_+
$$
then $\mu$ is non-negative measure on $I$.
\end{proposition}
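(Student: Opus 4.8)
The plan is to recognise the statement, after restriction to the interior $J := (-a,b)$ of $I = [-a,b]$, as the classical fact (\cite[Theorem~6.22]{LL}) that a non-negative distribution on an open set is a non-negative Radon measure, and then to promote this to a finite non-negative measure on all of $I$, dealing separately with possible atoms at the endpoints $-a$ and $b$. First I would observe that $C_c^\infty(J) \subset H$ and that convergence in $C_c^\infty(J)$ (fixed compact support, uniform convergence of derivatives) implies convergence in $\|\cdot\|_H$; hence $\varphi \mapsto \langle \mu,\varphi\rangle_{H^*,H}$ is a distribution on $J$. Since every non-negative $\varphi \in C_c^\infty(J)$ lies in $H_+$, this distribution is non-negative, so by \cite[Theorem~6.22]{LL} there is a non-negative Radon measure $\nu$ on $J$ with $\langle\mu,\varphi\rangle_{H^*,H} = \int_J \varphi\,d\nu$ for all $\varphi \in C_c^\infty(J)$.

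Next I would check that $\nu$ is finite. Since $\mathbf{1} \in H_+$ we have $m := \langle\mu,\mathbf{1}\rangle_{H^*,H} \ge 0$; and since $\mathbf{1} - \varphi \in H_+$ for every $\varphi \in C_c^\infty(J)$ with $0 \le \varphi \le 1$, we get $\int_J \varphi\,d\nu = \langle\mu,\varphi\rangle_{H^*,H} \le m$. Letting $\varphi \uparrow \mathbf{1}_J$ and applying monotone convergence yields $\nu(J) \le m < \infty$. Extending $\nu$ by zero mass at $-a$ and $b$ then makes it a finite non-negative Borel measure on $I$, and in particular an element of $H^*$.

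It remains to compare $\mu$ with $\nu$ near the endpoints, which is the delicate point: the hypothesis tests positivity only against the proper subspace $H \subsetneq C(I)$, so the Riesz--Markov theorem is not directly available, and the distributional picture on $J$ is blind to endpoint atoms. Since $\|\cdot\|_H$ is equivalent to the standard $H^1(J)$ norm, $C_c^\infty(J)$ is dense in $H_0 := \{\varphi \in H : \varphi(-a) = \varphi(b) = 0\}$, so $\langle\mu - \nu,\varphi\rangle_{H^*,H} = 0$ for all $\varphi \in H_0$; writing a general $\varphi \in H$ as $\varphi = \varphi(-a)\psi_{-a} + \varphi(b)\psi_b + \varphi_0$ with $\varphi_0 \in H_0$ and with fixed $\psi_{-a},\psi_b \in H$ adapted to the two endpoints, this gives $\mu = \nu + \alpha\delta_{-a} + \beta\delta_b$ in $H^*$ for some scalars $\alpha,\beta$. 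Finally, testing the hypothesis against piecewise-linear functions $\chi_\epsilon \in H_+$ with $\chi_\epsilon(-a) = 1$, $\chi_\epsilon(b) = 0$, $0 \le \chi_\epsilon \le 1$ and $\mathrm{supp}\,\chi_\epsilon \subset [-a,-a+\epsilon]$, one finds $0 \le \langle\mu,\chi_\epsilon\rangle_{H^*,H} = \int_J \chi_\epsilon\,d\nu + \alpha \le \nu\big((-a,-a+\epsilon]\big) + \alpha \to \alpha$ as $\epsilon \downarrow 0$, using finiteness of $\nu$; hence $\alpha \ge 0$, and $\beta \ge 0$ by the symmetric argument at $b$. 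Therefore $\mu = \nu + \alpha\delta_{-a} + \beta\delta_b$ is a non-negative measure on $I$. I expect the endpoint analysis — together with pinning down the finiteness of $\nu$ — to be the main obstacle; the interior part is essentially a citation.
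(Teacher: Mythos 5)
Your proof is correct, and it is considerably more detailed than what the paper does: the paper offers no proof at all, simply recalling the statement as ``Theorem 6.22 of Lieb \& Loss'' (positive distributions are positive measures). Your write-up correctly identifies why that citation alone does not settle the matter here: the cited theorem produces only a (possibly infinite) Radon measure on the \emph{open} interval, tested against $C_c^\infty$ functions, whereas the proposition concerns a functional on $H$, which sees the endpoints of $I=[-a,b]$ and pairs with functions that do not vanish there. Your three supplementary steps --- (i) finiteness of $\nu$ via testing against $\mathbf{1}-\varphi$ and monotone convergence, (ii) the identification $\mu=\nu+\alpha\delta_{-a}+\beta\delta_b$ using the density of $C_c^\infty(J)$ in $H_0=\{\varphi\in H:\varphi(-a)=\varphi(b)=0\}$ (valid since $H_0$ has codimension two and $\|\cdot\|_H$ is equivalent to the $H^1$ norm on a bounded interval), and (iii) $\alpha,\beta\ge 0$ by testing against the hat functions $\chi_\epsilon$ and using continuity from above of the finite measure $\nu$ --- are all sound, and together they close exactly the gaps that the bare citation leaves open. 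In short, the paper buys brevity by treating this as a known fact; your argument buys a genuinely complete proof, at the cost of the (routine but necessary) Sobolev-trace and endpoint bookkeeping.
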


\section{Extensions}\label{se:extensions}

Before we launch into a rigorous study of the stochastic evolution equation \eqref{eq:evol}, we briefly
discuss the deterministic version of the equation, where the case where the martingale term is
identically zero.  In this case, we can solve the equation.  Indeed, given a signed measure $\mu_0$
with $\mu_0(\RR) = 1$, let $\mu_t$ be the equivalent signed measure defined by
$$
\mu_t(dr) = \frac{1}{G_t} e^{-rt} \mu_0(dr)
$$
where the normalising constant
$$
G_t = \int_{\RR} e^{-rt} \mu_0(dr)
$$
is assumed positive and finite.
Letting
\begin{align*}
R_t & =  \int_{-\infty}^{\infty}r \ \mu_t(dr) \\
& = - \frac{d}{dt} \log G_t
\end{align*}
we have
\begin{align*}
d\left[\mu_t (dr) \right] & = (R_t-r) \mu_t(dr) dt.
\end{align*}
Note that we also have the identity
\begin{align*}
 e^{ -\int_t^T R_u du} & = \frac{G_T}{G_t} \\
&= \int_{\RR} e^{-(T-t) r} \mu_t(dr)
\end{align*}
as expected.

We  do not restrict our attention to non-negative measures,
since doing so introduces an unexpected constraint.
Indeed,  note that by integrating formally the evolution equation \eqref{eq:evol} we
have
$$
dR_t = - \int_{\RR} (R_t-r)^2 \mu_t(dr) dt + \int_{\RR} r \ M(dr \times dt).
$$
In particular, we see
 that the process $(R_t)_{t\ge 0}$ is a supermartingale if we assume that $\mu_t$ is
  non-negative  for all $t \ge 0$.  That is to say, in order to allow for mean reversion of the
interest rate under the risk-neutral measure, we are forced to work with signed measures.

Here is a result when $\mu_t$ is supported on a finite number of atoms $r_1, \ldots, r_n$.

\begin{proposition}
Let $G$ be a real separable Hilbert space, on which $W$ is a cylindrical Brownian motion.
Fix $n \ge 1$, and let
$$
\mathcal P = \left\{ x= (x_1, \ldots, x_n): x_i \ge 0 \mbox{ for all } i, \sum_{i=1}^n x_i = 1 \right\}
$$
For $1 \le i \le n$, let $b_i: \mathcal P \to \RR$ and $\sigma_i : \mathcal P \to G$ be Lipschitz functions such that
$$
\sum_{i=1}^n b_i(x) = 0 \mbox{ and } \sum_{i=1}^n \sigma_i(x) = 0
$$
for all $x \in \mathcal P$ and such there exists a constant $C  > 0$ such that
$$
| b_i(x) | + \|\sigma_i(x) \| \le C  x_i
$$
for all $x \in \mathcal P$ where $x=(x_1, \ldots, x_n)$.

Then for every $\xi \in \mathcal P$ there exists
unique adapted process $(X_t)_{t \ge 0}$ taking values
in $\mathcal P$ such that $X_0= \xi$ and
$$
dX_t = b(X_t) dt + \sigma(X_t) dW_t.
$$
Furthermore,
$$
X_t^i > 0 \mbox{ for all } t \ge 0 \mbox{ almost surely if and only if } \xi_i > 0.
$$
\end{proposition}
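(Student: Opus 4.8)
This proposition differs from Proposition~\ref{th:atomprop} only in the final displayed equivalence, so the plan is to take the existence and uniqueness of the $\mathcal P$-valued solution $(X_t)_{t\ge0}$ as already established and to concentrate on the characterisation of the coordinates $i$ for which $X^i$ stays strictly positive.

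First I would dispose of the easy implication: since $X_0=\xi$, if $X^i_t>0$ for all $t\ge0$ then in particular $\xi_i=X^i_0>0$. Equivalently, when $\xi_i=0$ the path $X^i$ already fails to be everywhere strictly positive because it starts at $0$; this direction requires nothing beyond the initial condition.

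For the substantive direction, namely $\xi_i>0\Rightarrow X^i_t>0$ for all $t\ge0$ almost surely, I would reuse the stochastic exponential argument from the proof of Proposition~\ref{th:atomprop}. After relabelling coordinates one may assume $\xi=(\hat\xi,0)$ with $\hat\xi\in\RR^m$ having all coordinates strictly positive and with the given index $i$ lying in $\{1,\dots,m\}$; by the uniqueness part of that proof the solution is $X=(\hat X,0)$, where $\hat X$ is the $\hat{\mathcal Q}$-valued solution of the reduced SDE on the simplex in $\RR^m$. Writing $X^i_{t\wedge T}=\xi_i\exp\!\big(Z^i_t-\tfrac{1}{2}[Z^i]_t\big)$ with $T=\inf\{t\ge0:\min_j\hat X^j_t=0\}$ and $Z^i$ the continuous semimartingale built from the bounded coefficients $c_i$ and $\tau_i$, the right-hand side is strictly positive at every finite time, which forces $T=\infty$ almost surely and hence $X^i_t=\hat X^i_t>0$ for all $t\ge0$, for each such $i$.

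I do not expect a genuinely new obstacle here; the only point needing a little care is that the conclusion must hold for all $t\ge0$ simultaneously rather than for each fixed $t$, so one should invoke the path-continuity of $X$ (hence of $Z^i$) to obtain the strict positivity of the stochastic exponential on the whole half-line on a single almost-sure event. With that observation the argument is complete.
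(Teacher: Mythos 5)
Your proposal is correct and follows essentially the same route as the paper: the "only if" direction is immediate from the initial condition, and the "if" direction is exactly the paper's stochastic-exponential argument $X^i_{t\wedge T}=\xi_i e^{Z^i_t-[Z^i]_t/2}$ combined with the relabelling reduction to the sub-simplex $\hat{\mathcal P}$ when some coordinates of $\xi$ vanish. Your observation that path continuity of $Z^i$ yields strict positivity on a single almost-sure event for all $t\ge 0$ simultaneously is the right way to make the final step precise.
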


\begin{remark} We are interested in the following
situation.
  Fix a collection of real numbers
$r_1, \ldots, r_n$ and let
$$
b_i(x)  = x_i \left( \sum_{j=1}^n r_j x_j - r_i \right)
$$
for $x=(x_1, \ldots, x_n) \in \mathcal{P}$.
Then the atomic measure valued process
$$
\mu_t = \sum_{i=1}^n X_t^i \delta_{r_i}
$$
evolves according the evolution equation.  Note that the measures
$\mu_s$ and $\mu_t$ equivalent almost surely for all $0 \le s \le t$.
\end{remark}

\begin{proof}
Uniqueness follows from the Lipschitz assumption in the usual way.  We need only prove existence.

Let $\Pi$ be the projection onto the closed convex set $\mathcal P$.
Note that $\Pi$ is Lipschitz, and hence the functions $b \circ \Pi$ and $\sigma \circ \Pi$
are also Lipschitz.   Given $\xi \in \mathcal P$, let
$(X_t)_{t \ge 0}$ be the unique strong solution to the SDE
$$
dX_t = b \circ \Pi(X_t) dt + \sigma\circ \Pi(X_t) dW_t.
$$
which exists by It\^o's theorem.   Note that by summing over the indices,  we have
$$
d\sum_{i=1}^n X^i_t = 0 \Rightarrow \sum_{i=1}^n X^i_t = 1 \mbox{ for all } t \ge 0.
$$
We need only show that $X_t^i \ge 0$  for all $t \ge 0$ and all $i$.  We would be
done since $\Pi(x)=x$ when $x \in \mathcal P$.

First we suppose that $\xi_i > 0$ for all $1 \le i \le n$.
Let $T = \inf\{ t \ge 0: \min \hat X^i_t = 0 \}$.  Note that $X_t \in \mathcal P$ for all $0 \le t \le T$.
We will now show that $T= \infty$ almost surely.

For each $1 \le i \le n$, let
$$
c_i (x) =  \left\{ \begin{array}{cl} \frac{ b_i(x)}{x_i} & \mbox{ if } x_i > 0 \\ 0 & \mbox{ otherwise } \end{array} \right.
$$
and
$$
\tau_i (x) =  \left\{ \begin{array}{cl} \frac{ \sigma_i(x)}{x_i} & \mbox{ if } x_i > 0 \\ 0 & \mbox{ otherwise } \end{array} \right.
$$
where $x=(x_1, \ldots, x_n) \in \mathcal{P}$.
Note that by assumption the functions $c_i$ and $\tau_i$ are bounded and in particular
$$
X_{t \wedge T}^i = \xi_i + \int_0^t X^i_{s \wedge T} dZ_t^i
$$
where $Z^i$ is the continuous semimartingale defined by
$$
Z_t^i = \int_0^t \one_{\{ s \le T\}} [ c_i(X_s) ds + \tau_i(X_s) dW_s].
$$
Hence we can write
$$
X_{t \wedge T}^i = \xi_i e^{ Z_t - [Z]_t/2}.
$$
Since the right-hand side is strictly positive almost surely for all finite $t$, the
event $\{T < \infty\}$ must have probability zero, as claimed.

Now consider the case where there is at least one $i$ such that $\xi_i = 0$.  By relabelling
if necessary, we may write $\xi = (\hat \xi, 0)$ where $\hat \xi \in \RR^m$ for some $m < n$
and $0 \in \RR^{n-m}$.  In fact, we have $\hat \xi \in \hat{ \mathcal P}$ where
$$
\hat{ \mathcal P} = \left\{ \hat x \in \RR^m:  \hat x_i > 0 \mbox{ for all } i, \ \sum_{i=1}^m  \hat x_i = 1\right\}.
$$
Note that by assumption that for all $\hat x \in \hat{\mathcal P}$ we have
$$
b_i(\hat x, 0) = 0 \mbox{ and } \sigma_i( \hat x, 0) = 0 \mbox{ for all } m+1 \le i \le n
$$
 and hence
$$
\sum_{i=1}^m b_i(\hat x, 0) = 0 \mbox{ and } \sum_{i=1}^m \sigma_i( \hat x, 0) = 0.
$$
  By the above argument there
exists a process $(\hat X)_{t \ge 0}$ taking values in $\hat{\mathcal P}$ such that
$$
d \hat X_t = \hat b(\hat X_t) dt + \hat \sigma(\hat X_t) dW_t
$$
where the functions $\hat b$ and $\hat \sigma$ are defined by $\hat b_i( \hat x) = b_i(\hat x, 0)$
 $\hat \sigma_i( \hat x) = \sigma_i(\hat x, 0)$ for $\hat x \in \hat{\mathcal P}$ and $1 \le i \le m$.
In particular the process $X = (\hat X, 0)$ solves the original SDE and takes values in $\mathcal P$
as desired.
\end{proof}

For completeness, the statement and proof of a standard result from convex analysis is
included below.

\begin{lemma}[Orthogonal projection  onto a convex sets is  1-Lipschitz.]  Let  $\mathcal P$  be a
closed convex subset of $\RR^n$.  Then for every $x \in \RR^n$, there
is a point $\Pi(x) \in \mathcal P$ such that
$$
\| x - \Pi(x) \| \le \| x - p \| \mbox{ for all } p \in \mathcal P.
$$
Furthermore, the inequality
$$
\| \Pi(x) - \Pi(y) \| \le \| x-y \|
$$
holds for all $x,y \in \RR^n$.
\end{lemma}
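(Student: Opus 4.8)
The plan is to proceed in three stages: existence of the projection, a variational characterization of it, and then a short deduction of the Lipschitz estimate from that characterization.

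\textit{Existence.} Fix $x \in \RR^n$ and pick any $p_0 \in \mathcal P$. The infimum $\inf_{p \in \mathcal P} \|x - p\|$ is unchanged if we restrict $p$ to the intersection of $\mathcal P$ with the closed ball of radius $\|x - p_0\|$ about $x$, and this intersection is compact because $\mathcal P$ is closed and bounded sets in $\RR^n$ are; hence the continuous map $p \mapsto \|x - p\|$ attains its minimum there, yielding a point $\Pi(x)$ with the required property. I would also remark in passing (though it is not needed for the statement) that the minimizer is unique, via the parallelogram identity: two minimizers $p, q$ would give $\|x - \tfrac{p+q}{2}\|^2 = \tfrac12\|x-p\|^2 + \tfrac12\|x-q\|^2 - \tfrac14\|p-q\|^2$, and since $\tfrac{p+q}{2} \in \mathcal P$ by convexity this forces $p = q$.

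\textit{Variational inequality.} The key step is to show that $\Pi(x)$ is characterized among points of $\mathcal P$ by
$$
\langle x - \Pi(x), \ p - \Pi(x) \rangle \le 0 \quad \text{for all } p \in \mathcal P .
$$
I would derive this by fixing $p \in \mathcal P$ and noting that convexity puts the whole segment $\Pi(x) + t(p - \Pi(x))$, $t \in [0,1]$, inside $\mathcal P$, so the quadratic polynomial $\phi(t) = \|x - \Pi(x) - t(p - \Pi(x))\|^2$ attains its minimum over $[0,1]$ at the endpoint $t = 0$; reading off $\phi'(0) \ge 0$ from the explicit expansion of $\phi$ gives exactly the stated inequality.

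\textit{Lipschitz bound.} Writing $u = \Pi(x)$ and $v = \Pi(y)$, I would apply the variational inequality once for $x$ with test point $p = v$ and once for $y$ with test point $p = u$, obtaining $\langle x - u, v - u\rangle \le 0$ and $\langle y - v, u - v\rangle \le 0$. Adding these two inequalities and rearranging gives $\|u - v\|^2 \le \langle x - y, \ u - v\rangle$, and Cauchy--Schwarz then yields $\|u - v\| \le \|x - y\|$, which is the claim. I do not expect a genuine obstacle here — every step is standard convex analysis — the only point needing a little care being the inference $\phi'(0) \ge 0$ from the fact that $t=0$ minimizes $\phi$ on $[0,1]$, which is a statement about a one-sided derivative but is immediate since $\phi$ is an explicit quadratic in $t$.
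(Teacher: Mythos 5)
Your proof is correct. The second and third stages — the variational inequality $\langle x - \Pi(x), p - \Pi(x)\rangle \le 0$ obtained by perturbing along the segment toward $p$, followed by adding the two inequalities for $x$ and $y$ and applying Cauchy--Schwarz — are essentially identical to the paper's argument (the paper writes the sum of the two inequalities as a three-term decomposition of $\|\Pi(x)-\Pi(y)\|^2$, but it is the same computation). The one genuine difference is the existence step: you invoke Heine--Borel, restricting to the compact set $\mathcal P \cap \overline{B}(x, \|x-p_0\|)$ and minimizing a continuous function there, whereas the paper takes a minimizing sequence and uses the parallelogram law to show it is Cauchy, then appeals to completeness and closedness. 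Your route is shorter and perfectly adequate for the stated finite-dimensional setting; the paper's route has the advantage of working verbatim in any Hilbert space (which is in the spirit of the rest of the paper, where projections onto convex sets of measures in $H^*$ lurk in the background), and as a by-product the parallelogram computation also delivers the uniqueness of the minimizer, which you obtain by a separate (correct) remark.
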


\begin{proof}
First, we show the existence of the projection.  Fix $x \in \RR^k$ and let
$$
d = \inf_{ p \in \mathcal P } \| x - p \|.
$$
Let
$(\pi_k)_k$ be a sequence in $\mathcal P$ such that
$$
\| x -\pi_k \| \to d.
$$
We will show that $(\pi_k)_k$ is Cauchy.
For fixed $k, h$, note that the $\frac{1}{2}(\pi_k+\pi_h) \in \mathcal P$ by convexity,
and hence
$$
\| x- \tfrac{1}{2}(\pi_k+\pi_h) \| \ge d.
$$
Therefore, by the parallelogram law we have
\begin{align*}
\| \pi_k - \pi_h \|^2 & = 2\| x - \pi_k\|^2 + 2\|x - \pi_h\|^2 - 4 \| x- \tfrac{1}{2}(\pi_k+\pi_h) \|^2 \\
& \le 2\| x - \pi_k\|^2 + 2\|x - \pi_h\|^2 - 4 d^2  \\
& \to 2 d^2 + 2d^2 - 4 d^2 = 0.
\end{align*}
By the completeness, the sequence $(\pi_k)_k$ converges to some point $\Pi \in \RR^n$.
And since $\mathcal P$ is closed by assumption, we will have $\Pi \in \mathcal P$.

Now, fix $x$ and $p \in \mathcal P$ and let $p_{\theta} = (1- \theta) \Pi(x) + \theta p$
for $0 \le \theta \le 1$.
Again by convexity, we have the inclusion $p_{\theta} \in \mathcal P$.  Note by the definition of $\Pi(x)$ we have
\begin{align*}
0 & \le \| x - p_{\theta} \|^2 - \| x- \Pi(x) \|^2 \\
& = 2 \theta \langle x - \Pi(x), \Pi(x) - p \rangle + \theta^2 \| \Pi(x) - p \|^2.
\end{align*}
Sending $\theta \downarrow 0$ in the above inequality yields the conclusion that
$$
 \langle  \Pi(x) - x , \Pi(x) - p \rangle  \le 0
$$
for all $p \in \mathcal P$.  Hence
\begin{align*}
\| \Pi(x) - \Pi(y) \|^2 & = \langle \Pi(x) - x, \Pi(x)- \Pi(y) \rangle + \langle y - \Pi(y), \Pi(x)- \Pi(y) \rangle
+ \langle x-y, \Pi(x)- \Pi(y) \rangle \\
& \le \langle x-y, \Pi(x)- \Pi(y) \rangle \\
& \le \| x-y\| \| \Pi(x) - \Pi(y) \|
\end{align*}
from which the Lipschitz bound follows.
\end{proof}

\end{document}